\newcommand{\N}{\mathbb{N}}
\newcommand{\R}{\mathbb{R}}
\newcommand{\Q}{\mathbb{Q}}
\newcommand{\C}{\mathbb{C}}
\newcommand{\I}{\mathds{1}}
\newcommand{\rst}[1]{\ensuremath{{\mathbin\mid}\raise-.5ex\hbox{$#1$}}}
\newcommand{\lie}{\mathfrak{g}}
\newcommand{\lien}{\mathfrak{n}}
\newcommand{\lieh}{\mathfrak{h}}
\newcommand{\lier}{\mathfrak{r}}
\DeclareMathOperator{\Gal}{Gal}
\DeclareMathOperator{\GL}{GL}
\DeclareMathOperator{\SL}{SL}
\DeclareMathOperator{\Aut}{Aut}
\author{Jonas Der\'e\thanks{The author was supported by a postdoctoral fellowship of the Research Foundation -- Flanders (FWO).}}
\title{\textbf{A note on the structure \\ of underlying Lie algebras}}
\date{\today}
\newtheorem{Def}{Definition}[section]
\newtheorem{Ex}[Def]{Example}
\newtheorem{Cor}[Def]{Corollary}
\newtheorem{Thm}[Def]{Theorem}
\newtheorem{Prop}[Def]{Proposition}
\newtheorem{Lem}[Def]{Lemma}
\newtheorem*{Prop*}{Proposition}
\newtheorem*{Lem*}{Lemma}
\newtheorem{QN}{Question}
\newtheorem{Con}{Conjecture}
\newtheorem*{rep@theorem}{\rep@title}
\newcommand{\newreptheorem}[2]{%
	\newenvironment{rep#1}[1]{%
		\def\rep@title{#2 \ref{##1}}%
		\begin{rep@theorem}}%
		{\end{rep@theorem}}}
\newcommand{\suchthat}{\;\ifnum\currentgrouptype=16 \middle\fi|\;}
\newcommand{\compcent}[1]{\vcenter{\hbox{$#1\circ$}}}
\newcommand{\comp}{\mathbin{\mathchoice
		{\compcent\scriptstyle}{\compcent\scriptstyle}
		{\compcent\scriptscriptstyle}{\compcent\scriptscriptstyle}}}
\begin{document}

\maketitle

\begin{abstract}
Every Lie algebra over a field $E$ gives rise to new Lie algebras over any subfield $F \subseteq E$ by restricting the scalar multiplication. This paper studies the structure of these underlying Lie algebra in relation to the structure of the original Lie algebra, in particular the question how much of the original Lie algebra can be recovered from its underlying Lie algebra over subfields $F$. By introducing the conjugate of a Lie algebra we show that in some specific cases the Lie algebra is completely determined by its underlying Lie algebra. Furthermore we construct examples showing that these assumptions are necessary.

As an application, we give for every positive $n$ an example of a real $2$-step nilpotent Lie algebra which has exactly $n$ different bi-invariant complex structures. This answers an open question by Di Scala, Lauret and Vezzoni motivated by their work on quasi-K\"ahler Chern-flat manifolds in differential geometry.  The methods we develop work for general Lie algebras and for general Galois extensions $F \subseteq E$, in contrast to the original question which only considered nilpotent Lie algebras of nilpotency class $2$ and the field extension $\R \subseteq \C$. We demonstrate this increased generality by characterizing the complex Lie algebras of dimension $\leq 4$ which are defined over $\R$ and over $\Q$. 
\end{abstract}

\section{Introduction}

Given a field extension $F \subseteq E$ and a Lie algebra $\lie$ over the field $E$, we define the underlying Lie algebra $\lie_F$ by regarding $\lie$ as a Lie algebra over only the subfield $F$. Many properties of the underlying Lie algebra correspond to properties of the original Lie algebra, e.g.~the Lie algebra $\lie$ is abelian if and only if the same holds for the Lie algebra $\lie_F$. Similarly, it easily follows that being nilpotent or solvable is equivalent for the Lie algebras $\lie$ and $\lie_F$, where moreover the nilpotency class and the derived length correspond. In this paper we investigate more deeply the algebraic structure of the underlying Lie algebra $\lie_F$ in relation to the Lie algebra $\lie$.

The main question we explore is under which conditions we can completely recover the algebraic structure of the Lie algebra $\lie$ from the underlying Lie algebra $\lie_F$. The motivation comes from a geometric problem of \cite{dlv12-1} in the special case of the field extension $\R \subseteq \C$, where the algebraic counterpart goes as follows.

\begin{QN}
	\label{Q5}
	Do there exist two non-isomorphic complex $2$-step nilpotent Lie algebras $\lie$ and $\lieh$ such that $\lie_\R \approx \lieh_\R$? 
\end{QN}
\noindent The background of this question lies in the study of compact quasi-K\"ahler Chern-flat manifolds. These manifolds are obtained as the quotient of a $2$-step nilpotent Lie group by a cocompact lattice, where the Lie algebra corresponding to the Lie group has a anti-bi-invariant almost complex structure. In \cite{dlv12-1} it is shown that these Lie algebras are exactly the underlying real Lie algebras of $2$-step nilpotent complex Lie algebras with the anti-bi-invariant almost complex structure determined by the bi-invariant complex structure.  

Underlying Lie algebras occur in many other places as a tool to construct new Lie algebras with specific properties and this for arbitrary field extensions $F \subseteq E$ of finite degree. For example, in \cite{bdv18-1} the authors use underlying Lie algebras over field extensions of finite degree to construct almost inner derivations which are not inner, related to the construction of isospectral nilmanifolds which are not isometric \cite{gw84-1}. Another instance comes from the construction of Anosov Lie algebras, where the underlying rational Lie algebra of the Heisenberg Lie algebra $\lieh_3\left(\Q(\sqrt{d})\right)$ with $d \in \N$ a square-free number was the first example of an Anosov Lie algebra which is not abelian, see \cite{smal67-1}. 

These applications indicate that we should not study Question \ref{Q5} for $2$-step nilpotent Lie algebras over the complex numbers $\C$ only. We suggest the following generalized question for field extensions of finite degree.

\begin{QN}
	\label{generalQN}
Let $F \subseteq E$ be a field extension of finite degree and $\lie, \hspace{0.5mm} \lieh$ be Lie algebras over the field $E$. Under which conditions on the Lie algebras $\lie$ and $\lieh$ does $\lie_F \approx \lieh_F$ imply that $$\lie \approx \lieh?$$
\end{QN}

\noindent We assume that the field extension $F \subseteq E$ has finite degree to ensure that the underlying Lie algebra $\lie_F$ has finite dimension if the original Lie algebra $\lie$ is finite-dimensional.

To study Question \ref{generalQN} we introduce the $\sigma$-conjugate $\lie^\sigma$ of a Lie algebra $\lie$ over $E$ for every $\sigma \in \Aut(E,F)$. One way of defining the Lie algebra $\lie^\sigma$ is taking a basis for the Lie algebra $\lie$ and applying the map $\sigma$ to the structure constants for this basis, see Example \ref{conjugatestructureconstants}. Our first main result states that two conjugate Lie algebras have the same underlying Lie algebra over $F$.

\begin{repThm}{sameunder}
Let $F \subseteq E$ be a field extension and $\sigma \in \Gal(E,F)$. If we denote by $\lie^\sigma$ the $\sigma$-conjugate of the Lie algebra $\lie$ over the field $F$, then $$\left(\lie^\sigma \right)_F \approx \lie_F.$$
\end{repThm}

\noindent So in particular, to produce a negative answer on Question \ref{Q5}, it suffices to construct a complex Lie algebra $\lie$ such that $\lie \not \approx \overline{\lie}$, where $\overline{\lie}$ denotes the complex conjugate of the Lie algebra $\lie$. We give $2$-step nilpotent examples in Section \ref{sec:examples} and even give for every $n \in \N$ a real $2$-step nilpotent Lie algebra which is the real underlying Lie algebra of exactly $n$ different complex Lie algebras. This demonstrates the necessity of extra conditions on $\lie$ and $\lieh$ in Question \ref{generalQN}.

The main theorem for studying Question \ref{generalQN} gives the structure of the underlying Lie algebra if we again extend the scalars on $\lie_F$ to the original field $E$ for Galois extensions $F \subseteq E$.
\begin{repThm}{sumconjugate}
	Let $F \subseteq E$ be a Galois extension and let $\lie$ be a Lie algebra over the field $E$. Let $\lie_F$ be the underlying Lie algebra over $F$, then there is an isomorphism $$ \lie_F \otimes_F E \approx \bigoplus_{\sigma \in \Gal(E,F)} \lie^\sigma $$ where $\lie^\sigma$ is the $\sigma$-conjugate of the Lie algebra $\lie$. 
\end{repThm}

We will show that if $\lie$ is defined over $F$, then every conjugate Lie algebra over $F$ is isomorphic to $\lie$, i.e.~for every $\sigma \in \Aut(E,F)$ we have $\lie^\sigma \approx \lie$. This fact also serves as a tool to characterize the Lie algebras which are defined over the field $F$ and we illustrate this for Lie algebras of dimension $\leq 4$ in Section \ref{sec:ex2}.  By using the techniques in \cite{fgh13-1} about indecomposable Lie algebras, we achieve the following solution to Question \ref{generalQN}.

\begin{repCor}{partialanswer}
Let $F \subseteq E$ be a Galois extension and $\lie$ a Lie algebra over $E$ which is defined over the subfield $F$. If $\lieh$ is another Lie algebra over $E$ such that $\lie_F \approx \lieh_F$, then the original Lie algebras $\lie \approx \lieh$ are isomorphic as well if either
\begin{enumerate}
	\item the Lie algebra $\lieh$ is defined over $F$, or
	\item the Lie algebra $\lie$ is indecomposable.
\end{enumerate}

\end{repCor}
\noindent As another consequence, we show that there are up to isomorphism only finitely many Lie algebras having the same underlying Lie algebra, including a way to describe them in Theorem \ref{classification}.

This paper is structured as follows. First we give some preliminaries about field extensions, restricting and extending the scalars of Lie algebras and how to decompose them into indecomposable ideals in Section \ref{sec:prel}. Next, Section \ref{sec:conj} introduces the conjugate of a Lie algebra and describes its properties, including a proof of Theorem \ref{sameunder}. The proof and applications of Theorem \ref{sumconjugate} follow in Section \ref{sec:ans} and we give some examples, both over the complex numbers as over other fields in Section \ref{sec:examples}, including the answer to Question \ref{Q5}. Finally we discuss some open questions about the existence of real forms and conjugate Lie algebras in Section \ref{sec:qu}.

\paragraph*{Acknowledgements}

I would like to express my gratitude to J. Lauret for pointing out the open question in his work \cite{dlv12-1} and K. Dekimpe for introducing me to the results about decomposing Lie algebras in \cite{fgh13-1}. I want to thank B.~Verbeke for showing me the preliminary results of \cite{bdv18-1} about almost inner derivations.

\section{Preliminaries}
\label{sec:prel}

In this section, we introduce the necessary background for the main results and fix notations for the remainder of this paper. First we present the main concepts from Galois theory, afterwards we demonstrate how to construct from a given Lie algebra over a field new Lie algebras over subfields and field extensions by restricting and extending the scalar multiplication respectively. Finally we recall some results from \cite{fgh13-1} about decomposing Lie algebras into indecomposable ideals.

\paragraph{Galois extensions}
Let $F \subseteq E$ be a field extension. The automorphisms $\Aut(E)$ of the field $E$ form a group under the composition which contains the subgroup  $$\Aut(E,F) = \left\{\sigma \in \Aut(E) \suchthat \forall x \in F: \sigma(x) = x \right\}$$ of automorphisms fixing the subfield $F$. If $\Q \subseteq E$ is a field of characteristic $0$, then $\Aut(E,\Q)$ is equal to automorphism group $\Aut(E)$. Let $F \subseteq E$ be a field extension such that $E$ is algebraically closed, then every automorphism $\sigma: F \to F$ extends to an automorphism $\tilde{\sigma}: E \to E$. In particular this property holds for every automorphism $\sigma: F \to F$ of a subfield $F \subseteq \C$. 

The field $E$ always forms a vector space over the field $F$ and we define the degree $[E:F]$ of the field extension by $[E:F] = \dim_F (E)$. We say that $F \subseteq E$ is a field extension of finite degree if $[E:F] < \infty$ and in this case the order of the group $\Aut(E,F)$ satisfies $$\vert \Aut(E,F) \vert \leq [E:F].$$ If equality holds in the latter inequality, we say that $E$ is a \textbf{Galois extension} of $F$, calling $\Gal(E,F) = \Aut(E,F)$ the \textbf{Galois group} of the field extension $F \subseteq E$. Under this assumption the field $F$ is equal to the fixed field of the Galois group, i.e.~$x \in F$ if and only if $\sigma(x) = x$ for all $\sigma \in \Gal(E,F)$. In fact, the latter statement is equivalent to being a Galois extension for finite degree field extensions. 

If $F \subseteq E$ is a field extension of finite degree in characteristic $0$, then there always exists a field extension $E \subseteq E^\prime$ of finite degree such that $F \subseteq E^\prime$ is a Galois extension. Hereinafter we usually assume that field extensions are Galois extensions, although the main results could also be stated for general finite seperable field extensions, albeit more technically.

For fixing notations, we introduce the easiest example of a Galois extension, namely $\R \subseteq \C$.  
\begin{Ex}
	\label{CoverR}
The Galois group $\Gal(\C,\R) = \{\I_\C, \overline{ \phantom{z}}  \}$ consists of two elements, where $\overline{\phantom{z}}: \C \to \C$ is the complex conjugation map. 
\end{Ex} 
\noindent Although this example is important for answering Question \ref{Q5}, we will also apply our main results about underlying Lie algebras to Galois extensions $\Q \subseteq E$ in Section \ref{sec:examples}.

\paragraph{Restricting and extending scalar multiplication}

Given a Lie algebra $\lie$ over the field $E$, there are two different ways in which we can construct a new Lie algebra over a different field, namely by either restricting or extending the scalar multiplication. 

The first method, which we already discussed in the introduction, starts by taking a subfield $F \subseteq E$ to construct the underlying Lie algebra $\lie_F$. If $V$ is a vector space over the field $E$, then the scalar multiplication is given by a map $E \times V \to V$ which satisfies certain conditions for being a vector space. The underlying vector space $V_F$ is defined by the inclusion of the set $F \times V$ in $E \times V$ and thus defining a new scalar multiplication $F \times V \hookrightarrow E \times V \to V$. In this case we say that we \textbf{restricted} the scalar multiplication to the subfield $F$. Note that if $V$ is finite-dimensional and if $F \subseteq E$ is a field extension of finite degree, then $V_F$ is also finite-dimensional with dimension $$\dim_F(V_F) = [E:F] \dim_E(V).$$ If $\lie$ is a Lie algebra over $E$, then the Lie bracket will in particular be $F$-linear, hence $\lie_F$ forms a Lie algebra over $F$ of dimension $\dim_F(\lie_F) = [E:F] \dim_E(\lie)$, which we call the \textbf{underlying Lie algebra} over $F$. In the special case of $\R \subseteq \C$ we say that $\lie$ is given by a \textbf{bi-invariant complex structure} on $\lie_\R$.

The second method is to take a field extension $E \subseteq F^\prime$ and \textbf{extend} the scalar multiplication of the Lie algebra $\lie$ by considering the tensor product $\lie \otimes_E F^\prime$. In some papers, this Lie algebra is denoted by a superindex $F^\prime$, but this notation is avoided because of its similarity to the notation for the underlying Lie algebra. In this case, $$\dim_{F^\prime} \left( \lie \otimes_E F^\prime \right) = \dim_E \left( \lie \right)$$ and we say that $\lie$ is an \textbf{$E$-form} of the Lie algebra $\lie \otimes_E F^\prime$. If a Lie algebra $\lieh$ over $F^\prime$ is isomorphic to $\lie \otimes_E F^\prime$ for some Lie algebra $\lie$ over $E$, we say that $\lieh$ is \textbf{defined over $E$}. 

In some texts the real underlying Lie algebra $\lie_{\R}$ of a complex Lie algebra $\lie$ is called the realification of $\lie$. We avoid this name because of its resemblance to the complexification of a real Lie algebra, which means extending the field to $\C$, in contrast to restricting the field to $\R$. The underlying real Lie algebra has been studied for its importance in studying simple Lie algebras over $\R$, see for example \cite[Page 34]{ov93-2}.

Although this is clear from the definitions, we want to emphasize that for a non-trivial field extension $F \subseteq E$ and a Lie algebra $\lie$ over $E$, it does not hold that $\lie_F \otimes_F E$ is isomorphic to $\lie$, which can already be seen by comparing the dimensions. Up till now, it is not described what the relation is between these two methods of changing the field. The results of this paper can also be regarded as to which Lie algebra you get by first restricting and then again extending the scalar multiplication, see Theorem \ref{sumconjugate}.

\paragraph{Indecomposable Lie algebras}
In this part we recall some results of \cite{fgh13-1} about how to decompose a Lie algebra into indecomposable ideals. Recall that a Lie algebra $\lie$ is called \textbf{decomposable} if there exists two proper ideals $\lie_1, \lie_2 \le \lie$ such that $\lie = \lie_1 \oplus \lie_2$. Note that in this case $[\lie_1,\lie_2] = 0$ since both subspaces are ideals and $\lie_1 \cap \lie_2 = 0$. A Lie algebra is called \textbf{indecomposable} if it is not decomposable into proper ideals. A decomposition of a Lie algebra into indecomposable ideals is a finite set of ideals $\lie_1, \ldots, \lie_k$ such that $$ \lie = \bigoplus_{i=1}^k \lie_i$$ and every ideal $\lie_i$ is indecomposable. 

Although \cite[Theorem 3.3]{fgh13-1} is only formulated for real Lie algebras, the proof works for Lie algebras over any field.

\begin{Thm}
	\label{krull}
	If $\lie$ is a Lie algebra over the field $E$ with two decompositions $$\lie = \bigoplus_{i=1}^k \lie_i = \bigoplus_{j=1}^l \lieh_i$$ into indecomposable ideals $\lie_i$ and $\lieh_j$, then the number of ideals is equal, i.e.~$k = l$, and moreover up to renumbering the ideals it holds that $\lie_i \approx \lieh_i$ for all $1 \leq i \leq k$.
\end{Thm}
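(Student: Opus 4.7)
The plan is to adapt the classical Krull--Schmidt argument from module theory to Lie algebras. Each decomposition $\lie = \bigoplus_i \lie_i$ comes with projection maps $\pi_i : \lie \to \lie_i$ which are Lie algebra homomorphisms, since the summands pairwise commute (so both the kernel $\bigoplus_{k \neq i} \lie_k$ and the image $\lie_i$ are ideals). The same holds for the projections $\rho_j : \lie \to \lieh_j$ associated to the second decomposition. The strategy is to match an indecomposable summand $\lie_1$ with some $\lieh_j$ isomorphic to it, pass to complementary ideals, and recurse on $\max(k,l)$.

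Fix $\lie_1$ and consider the endomorphisms $\phi_j := \pi_1 \circ \rho_j \rst{\lie_1} : \lie_1 \to \lie_1$. Since $\sum_j \rho_j = \I_{\lie}$, we have $\sum_j \phi_j = \I_{\lie_1}$ as linear maps. The key lemma, and the main obstacle, is that at least one $\phi_j$ must be a Lie algebra automorphism of $\lie_1$. This rests on a Fitting-type dichotomy: any endomorphism $\phi$ of a finite-dimensional indecomposable Lie algebra is either an automorphism or nilpotent. The dichotomy would follow by choosing $n$ large enough for the chains $\ker(\phi^n)$ and $\phi^n(\lie_1)$ to stabilize, verifying that both factors in the resulting direct-sum decomposition $\lie_1 = \ker(\phi^n) \oplus \phi^n(\lie_1)$ are in fact \emph{ideals} of $\lie_1$ (the kernel always is; showing $\phi^n(\lie_1)$ is an ideal requires the Jacobi identity together with the bijectivity of $\phi^n$ on $\phi^n(\lie_1)$), and invoking indecomposability to kill one of the two factors. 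A standard calculation then shows that if every $\phi_j$ were nilpotent, their sum could not equal $\I_{\lie_1}$, so some $\phi_{j_0}$ is an automorphism.

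Once $\phi_{j_0}$ is known to be an automorphism, $\rho_{j_0} \rst{\lie_1} : \lie_1 \to \lieh_{j_0}$ is injective and $\pi_1 \rst{\lieh_{j_0}} : \lieh_{j_0} \to \lie_1$ is surjective. Running the symmetric argument starting from the indecomposable $\lieh_{j_0}$, some composition $\rho_{j_0} \circ \pi_i \rst{\lieh_{j_0}}$ is an automorphism of $\lieh_{j_0}$; comparing dimensions with the injection $\rho_{j_0} \rst{\lie_1}$ forces $i = 1$, and we conclude $\lie_1 \approx \lieh_{j_0}$. After this matching, one would then argue that replacing $\lie_1$ by $\lieh_{j_0}$ inside the first decomposition yields $\lie = \lieh_{j_0} \oplus \bigoplus_{i \geq 2} \lie_i$, so that the complementary ideal $\bigoplus_{i \geq 2} \lie_i$ admits both decompositions $\bigoplus_{i \geq 2} \lie_i$ and $\bigoplus_{j \neq j_0} \lieh_j$ (up to isomorphism). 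Induction on the total number of summands then gives $k = l$ and the desired matching, completing the proof.
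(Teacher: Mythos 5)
The paper itself does not prove this statement; it cites \cite{fgh13-1} and observes that the proof there works over any field. Your plan is the expected one -- the classical Krull--Remak--Schmidt exchange argument run on the projections of the two decompositions -- but the key lemma on which everything rests is false as you state it, and the justification you sketch for it does not close the gap. It is \emph{not} true that every Lie algebra endomorphism of a finite-dimensional indecomposable Lie algebra is an automorphism or nilpotent. Take $\lie_1$ the two-dimensional non-abelian Lie algebra with basis $X,Y$ and $[X,Y]=Y$ (indecomposable, since a sum of two one-dimensional ideals would be abelian), and let $\phi(aX+bY)=aX$. This is an idempotent Lie algebra endomorphism, neither invertible nor nilpotent. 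It also defeats your parenthetical repair: here $\phi^n(\lie_1)=\langle X\rangle$ is a subalgebra on which $\phi$ is bijective, yet it is not an ideal ($[Y,X]=-Y$), so ``Jacobi plus bijectivity of $\phi^n$ on its image'' cannot be what makes the Fitting summand $\phi^n(\lie_1)$ an ideal.

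What saves the argument is a property your maps $\phi_j=\pi_1\comp\rho_j\rst{\lie_1}$ have but which you never isolate: because the summands of each decomposition are ideals with pairwise trivial brackets, the projections satisfy $\pi_i([U,V])=[\pi_i(U),V]=[U,\pi_i(V)]$, i.e.\ they commute with all inner derivations. The set of such ``normal'' linear maps is a unital associative subalgebra of $\mathrm{End}_E(\lie)$, closed under the sums and compositions you form, and for a normal $\phi$ both $\ker(\phi^n)$ and $\phi^n(\lie_1)$ are ideals (the image because $[\lie_1,\phi^n(\lie_1)]=\phi^n([\lie_1,\lie_1])$), so Fitting's lemma plus indecomposability gives the dichotomy \emph{for normal maps}, and the local-ring argument showing that not all $\phi_j$ can be non-invertible when $\sum_j\phi_j=\I_{\lie_1}$ goes through inside this subalgebra. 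You should restate and prove the dichotomy in that setting; note also that the final isomorphism $\lie_1\approx\lieh_{j_0}$ should be exhibited as the bijective Lie algebra homomorphism $\rho_{j_0}\rst{\lie_1}$ (injective because $\phi_{j_0}$ is invertible, surjective by an idempotent argument or dimension count in $\lieh_{j_0}$), since a mere isomorphism of $\ad$-modules would not suffice. With these corrections the exchange-and-induct scheme you describe is sound.
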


The statement in \cite{fgh13-1} is a stronger version which also describes other uniqueness properties of the decomposition, but for our purposes the formulation of Theorem \ref{krull} suffices. Note that uniqueness of the decomposition only holds up to isomorphism, as we illustrate for clarity in the following example.

\begin{Ex}
Let $\lieh_3(E)$ be the Heisenberg Lie algebra of dimension $3$ over the field $E$, i.e.~the Lie algebra with basis $X, Y, Z$ and only non-trivial bracket given by $$[X,Y] = Z.$$ It is easy to see that this Lie algebra is indecomposable since every proper ideal is abelian. 

Consider the direct sum $\lie = \lieh_3(E) \oplus \lieh_3(E)$ with basis $X_i, Y_i, Z_i$ for the $i$-th component of the direct sum. On the one hand, we can decompose $\lie$ as we defined it, namely as the direct sum of the ideals $\langle X_1, Y_1, Z_1 \rangle$ and $\langle X_2, Y_2, Z_2 \rangle$. On the other hand, the ideals $$\langle X_1, Y_1 + Z_2, Z_1 \rangle \text{ and } \langle X_2, Y_2, Z_2\rangle$$ give a second decomposition of $\lie$. Although each of these ideals is isomorphic to $\lieh_3(E)$, the decompositions are not identical.
\end{Ex}
\noindent This example illustrates that the isomorphisms we derive from Theorem \ref{krull} are not canonical, hence the main results of this paper should only be interpreted as isomorphic.

The main application of Theorem \ref{krull} for our purposes is to recognize a Lie algebra as a component in a direct sum. The following two consequences will play an important role in the remainder of this paper. Since the proofs are elementary, we ommit the details.

\begin{Prop}
	\label{copies}
	Suppose that $\lie$ and $\lieh$ are finite dimensional Lie algebras over a field $E$ such that $$\underbrace{\lie \oplus \ldots \oplus \lie}_{n \text{ times}}\approx \underbrace{\lieh \oplus \ldots \oplus \lieh }_{n \text{ times}}$$ for some $n >0$, then $\lie \approx \lieh$.
\end{Prop}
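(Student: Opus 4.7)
The plan is to apply Theorem \ref{krull} to both sides of the given isomorphism and compare the multiplicities of each indecomposable factor.

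First I would fix a decomposition of $\lie$ into indecomposable ideals, say $\lie = \lie_1 \oplus \ldots \oplus \lie_k$, and group these ideals by isomorphism type: let $\mathfrak{a}_1, \ldots, \mathfrak{a}_r$ be pairwise non-isomorphic indecomposable Lie algebras such that each $\lie_i$ is isomorphic to exactly one $\mathfrak{a}_s$, and write $m_s$ for the number of indices $i$ with $\lie_i \approx \mathfrak{a}_s$. Doing the same for $\lieh$, I may enlarge the list of $\mathfrak{a}_s$'s (allowing $m_s = 0$ or $m_s' = 0$) so that $\lieh$ decomposes using the same isomorphism types with multiplicities $m_s'$. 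Then $\lie \approx \lieh$ if and only if $m_s = m_s'$ for every $s$.

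Next I would observe that taking the $n$-fold direct sum multiplies each multiplicity by $n$, so
\[
\underbrace{\lie \oplus \ldots \oplus \lie}_{n} \approx \bigoplus_{s=1}^r \underbrace{\mathfrak{a}_s \oplus \ldots \oplus \mathfrak{a}_s}_{n m_s \text{ times}},
\]
and analogously for $\lieh$ with multiplicities $n m_s'$. Both of these are decompositions into indecomposable ideals (since the $\mathfrak{a}_s$ are indecomposable by construction). By the hypothesis these two Lie algebras are isomorphic, so Theorem \ref{krull} forces the list of indecomposable factors to agree up to renumbering. Since the $\mathfrak{a}_s$ are pairwise non-isomorphic, this means $n m_s = n m_s'$ for each $s$, and hence $m_s = m_s'$ because $n > 0$. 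Therefore $\lie \approx \lieh$, as required.

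The argument is essentially book-keeping on top of Theorem \ref{krull}; the only subtle point is that Theorem \ref{krull} as stated only gives matching of factors up to renumbering, so one has to pass through the multiplicities $m_s$ of each isomorphism class rather than trying to match the $\lie_i$ with the $\lieh_j$ directly. Finite-dimensionality is used implicitly to guarantee that the decompositions into indecomposable ideals exist and are finite.
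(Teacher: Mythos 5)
Your proof is correct and follows the same route as the paper, which simply invokes Theorem \ref{krull} after decomposing $\lie$ and $\lieh$ into indecomposable ideals; you have just made the multiplicity bookkeeping explicit, which the paper leaves to the reader.
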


\begin{Prop}
	\label{fromsum}
	Let $\lie_1, \lie_2, \lieh$ be a finite dimensional Lie algebra over a field $E$. Suppose that $\lie_1 \oplus \lieh \approx \lie_2 \oplus \lieh$, then $\lie_1 \approx \lie_2$. 
\end{Prop}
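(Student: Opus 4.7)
The plan is to deduce this cancellation property directly from Theorem \ref{krull}, in the same way one deduces the cancellation law in any Krull--Schmidt category. Since $\lie_1$, $\lie_2$, and $\lieh$ are finite-dimensional, each admits a decomposition into indecomposable ideals: write
$$ \lie_1 = \bigoplus_{i=1}^{a} A_i, \qquad \lie_2 = \bigoplus_{j=1}^{b} B_j, \qquad \lieh = \bigoplus_{k=1}^{c} H_k, $$
where the summands are indecomposable. Concatenating, this produces decompositions of $\lie_1 \oplus \lieh$ and $\lie_2 \oplus \lieh$ into indecomposable ideals.

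Now I apply Theorem \ref{krull} to the isomorphism $\lie_1 \oplus \lieh \approx \lie_2 \oplus \lieh$. This yields that the two indecomposable decompositions have the same number of factors (so $a+c = b+c$, i.e.\ $a = b$) and that, after a suitable permutation, corresponding indecomposable summands are isomorphic as Lie algebras.

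To extract $\lie_1 \approx \lie_2$ from this, I pass to multiplicities of isomorphism classes. For every isomorphism class $[L]$ of indecomposable Lie algebras over $E$, let $m_1([L])$, $m_2([L])$, and $n([L])$ denote the number of summands in the above decompositions of $\lie_1$, $\lie_2$, and $\lieh$ respectively that are isomorphic to $L$. The conclusion of Theorem \ref{krull}, applied to the two indecomposable decompositions of $\lie_1 \oplus \lieh \approx \lie_2 \oplus \lieh$, is precisely that
$$ m_1([L]) + n([L]) = m_2([L]) + n([L]) $$
for every $[L]$. Since $n([L])$ is a finite natural number, we may subtract it to obtain $m_1([L]) = m_2([L])$ for every isomorphism class $[L]$.

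Consequently the decompositions of $\lie_1$ and $\lie_2$ into indecomposable ideals have the same multiplicity of every isomorphism type, and after pairing them up one obtains an explicit isomorphism $\lie_1 \approx \lie_2$ by taking the direct sum of the componentwise isomorphisms. There is no real obstacle: the entire content is already contained in the Krull--Schmidt statement of Theorem \ref{krull}, and the only additional ingredient is the trivial cancellation of the finite natural number $n([L])$. The finite-dimensionality hypothesis is used exactly twice, to ensure that indecomposable decompositions exist and that all multiplicities involved are finite.
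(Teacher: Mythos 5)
Your proof is correct and follows essentially the same route as the paper, which simply invokes Theorem \ref{krull} after decomposing $\lie_1$, $\lie_2$, and $\lieh$ into indecomposable ideals. Your version merely makes explicit the multiplicity-counting and cancellation step that the paper leaves to the reader.
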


\begin{proof}
Both propositions follow directly from Theorem \ref{krull} and writing either $\lie, \lieh$ or $\lie_1, \lie_2, \lieh$ as a direct sum of indecomposable ideals.
\end{proof}

\section{Conjugate Lie algebras}
\label{sec:conj}

One of the main ingredients to realize the results in this paper is the notion of conjugate Lie algebras. In this section we introduce this construction and prove the properties needed for the remaining part of this paper. In Theorem \ref{sameunder} we show that the underlying Lie algebras of conjugate Lie algebras are isomorphic, although the Lie algebras itself might not be isomorphic, hence leading to examples for Question \ref{Q5}. The concrete examples are only given in Section \ref{sec:examples}, after we develop some more machinery in Section \ref{sec:ans} for computing the exact number of Lie algebras which have isomorphic underlying Lie algebra.

\paragraph{$\sigma$-linear maps:}

From now on, we fix a field $E$ over which we will consider vector spaces and Lie algebras. In order to introduce conjugate Lie algebras, we need to consider maps between vector spaces and Lie algebras which are not linear, but behave well under the action of a field automorphism $\sigma \in \Aut(E)$.

\begin{Def}
Let $V$ and $W$ be vector spaces over a field $E$ and $\sigma: E \to E$ a field automorphism. We say that a map $\varphi: V \to W$ is \textbf{$\sigma$-linear} if $$\varphi \left( \lambda v + \mu w \right) = \sigma(\lambda) v + \sigma(\mu) w$$ for all $v, w \in V$ and $\lambda, \mu \in E$. If $\varphi$ is moreover bijective, we call it a \textbf{$\sigma$-isomorphism} between the vector spaces $V$ and $W$.
	
In the special case where $V = \lie$ and $W = \lieh$ are Lie algebras, we say that $\varphi$ is a \textbf{$\sigma$-morphism} if it is both $\sigma$-linear and if it preserves the Lie bracket, i.e. $$\varphi \big(\left[X,Y\right] \big) = \left[\varphi \left(X\right), \varphi \left(Y \right) \right]$$ for all $X, Y \in \lie$. If $\varphi$ is bijective we call it an \textbf{$\sigma$-isomorphism} between the Lie algebras $\lie$ and $\lieh$.
\end{Def}

The following two lemmas are elementary, so we ommit the proof.

\begin{Lem}
	\label{compsigma}
	Let $E$ be any field and $U,\hspace{1mm} V,\hspace{1mm} W$ be vector spaces over the field $E$. For all field automorphisms $\sigma, \tau: E \to E$ such that $\varphi: U \to V$ is $\sigma$-linear and $\psi: V \to W$ is $\tau$-linear, the composition $$\psi \comp \varphi: U \to W$$ is $\tau \comp \sigma$-linear. If $\varphi$ and $\psi$ are moreover $\sigma$- and $\tau$-isomorphisms respectively, then $\psi \comp \varphi$ is a $\tau \comp \sigma$-isomorphism. If $U,\hspace{1mm} V$ and $W$ are Lie algebras and $\varphi, \psi$ are $\sigma$- and $\tau$-(iso)morphisms between Lie algebras respectively, then $\psi \comp \varphi$ is a $\tau \comp \sigma$-(iso)morphism between Lie algebras. If $\varphi$ is a $\sigma$-isomorphism, then $\varphi^{-1}$ is a $\sigma^{-1}$-isomorphism.
\end{Lem}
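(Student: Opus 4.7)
The plan is to verify each of the three claims (composition of $\sigma$-linear maps, composition respecting Lie brackets, and the inverse of a $\sigma$-isomorphism) by direct calculation using the definitions. None of these steps involve any nontrivial idea; the whole content is that scalars pass through a $\sigma$-linear map with a twist by $\sigma$, and composing two such maps twists by the composition of the field automorphisms.

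First I would take $u_1, u_2 \in U$ and $\lambda, \mu \in E$ and compute $(\psi \comp \varphi)(\lambda u_1 + \mu u_2)$ by pushing scalars out: applying $\varphi$ first pulls the scalars out as $\sigma(\lambda), \sigma(\mu)$, and then applying $\psi$ pulls them out further as $\tau(\sigma(\lambda)), \tau(\sigma(\mu))$, which is exactly the defining property of $(\tau \comp \sigma)$-linearity for $\psi \comp \varphi$. The statements about bijectivity of the composition and about Lie brackets are immediate: compositions of bijections are bijective, and the bracket calculation $(\psi \comp \varphi)([X,Y]) = \psi([\varphi(X), \varphi(Y)]) = [(\psi \comp \varphi)(X), (\psi \comp \varphi)(Y)]$ follows from applying the bracket-preserving property of $\varphi$ and then of $\psi$.

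For the last claim, that $\varphi^{-1}$ is $\sigma^{-1}$-linear when $\varphi$ is a $\sigma$-isomorphism, I would argue by transport: given $v_1, v_2 \in V$ and $\lambda, \mu \in E$, set $u_i = \varphi^{-1}(v_i)$ and compute
\[
\varphi\bigl(\sigma^{-1}(\lambda) u_1 + \sigma^{-1}(\mu) u_2\bigr) = \sigma(\sigma^{-1}(\lambda)) v_1 + \sigma(\sigma^{-1}(\mu)) v_2 = \lambda v_1 + \mu v_2,
\]
so applying $\varphi^{-1}$ to both sides gives the required identity. When $\varphi$ is additionally a Lie algebra $\sigma$-isomorphism, the analogous transport argument for the bracket works because $\varphi([\varphi^{-1}(X), \varphi^{-1}(Y)]) = [X,Y]$.

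The only place where any attention is needed is bookkeeping the order of $\sigma$ and $\tau$ (the composition twists by $\tau \comp \sigma$, not $\sigma \comp \tau$), and symmetrically that the inverse twists by $\sigma^{-1}$; but there is no real obstacle here. Since the paper already states that the lemma is elementary and omits the proof, writing out the chain of equalities for the first claim and remarking that the others follow by the same routine verification would suffice.
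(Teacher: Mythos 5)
Your verification is correct and is exactly the routine computation the paper has in mind; the paper explicitly omits the proof as elementary, and your direct unwinding of the definitions (including getting the order $\tau \comp \sigma$ right and the transport argument for $\varphi^{-1}$) is the intended argument. Nothing further is needed.
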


\begin{Lem}
	\label{morphoverF}
	Let $F \subseteq E$ be a field extension, $\sigma \in \Aut(E,F)$ and $V$ a vector space over $E$. Every $\sigma$-linear map $\varphi: V \to W$ induces a linear map $$\varphi_F : V_F \to W_F$$ between the underlying vector spaces over $F$. If $V = \lie$ and $W = \lieh$ are Lie algebras and $\varphi$ is a $\sigma$-morphism, then the map $\varphi_F$ is a Lie algebra morphism between the underlying Lie algebras $\varphi_F: \lie_F \to \lieh_F$. If $\varphi$ is a $\sigma$-isomorphism then $\varphi_F$ is an isomorphism between the underlying Lie algebras.
\end{Lem}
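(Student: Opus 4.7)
The plan is to observe that the underlying set of $V_F$ (resp.~$\lie_F$) coincides with that of $V$ (resp.~$\lie$)---only the scalar domain has shrunk---so the \emph{same} set-theoretic map $\varphi$ serves as a candidate for $\varphi_F$. The entire content of the lemma is then the verification that this candidate satisfies $F$-linearity (and preserves the bracket when relevant), together with the bijectivity statement.

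First I would define $\varphi_F : V_F \to W_F$ to be $\varphi$ as a map of underlying sets. To check $F$-linearity, take $\lambda, \mu \in F$ and $v, w \in V_F$, and compute
$$\varphi_F(\lambda v + \mu w) = \varphi(\lambda v + \mu w) = \sigma(\lambda) \varphi(v) + \sigma(\mu) \varphi(w) = \lambda \varphi_F(v) + \mu \varphi_F(w),$$
where the middle equality uses the $\sigma$-linearity of $\varphi$ and the last equality is the crucial observation that $\sigma \in \Aut(E,F)$ fixes $F$ pointwise, so $\sigma(\lambda) = \lambda$ and $\sigma(\mu) = \mu$. This is really the only step with any content.

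When $V = \lie$ and $W = \lieh$ are Lie algebras and $\varphi$ is a $\sigma$-morphism, the Lie brackets on $\lie_F$ and $\lieh_F$ coincide with those on $\lie$ and $\lieh$ by definition of the underlying Lie algebra; hence preservation of brackets by $\varphi_F$ is exactly preservation of brackets by $\varphi$, which holds by assumption.

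Finally, if $\varphi$ is a $\sigma$-isomorphism, then $\varphi_F$ is bijective as a set map (being identical to $\varphi$ on underlying sets). By Lemma \ref{compsigma} the inverse $\varphi^{-1}$ is a $\sigma^{-1}$-isomorphism, and since $\sigma^{-1}$ again lies in $\Aut(E,F)$, the first part of the argument applied to $\varphi^{-1}$ shows that $(\varphi^{-1})_F$ is an $F$-linear Lie algebra morphism; it is evidently a two-sided inverse of $\varphi_F$, so $\varphi_F$ is an isomorphism of underlying Lie algebras. There is no serious obstacle here---the only thing worth emphasizing in the write-up is that the hypothesis $\sigma \in \Aut(E,F)$ (rather than merely $\sigma \in \Aut(E)$) is exactly what makes $\sigma$-linearity over $E$ collapse to ordinary $F$-linearity.
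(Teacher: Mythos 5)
Your proof is correct, and it is precisely the elementary argument the paper has in mind when it omits the proof of this lemma (and of Lemma \ref{compsigma}) as routine: the underlying set and bracket are unchanged, and $\sigma$-linearity collapses to $F$-linearity because $\sigma$ fixes $F$ pointwise. Nothing further is needed.
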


Although $\sigma$-morphisms between Lie algebras are not linear, they do share many properties with morphisms. Although this is not important for the remainder of this paper, one can show that the image $\varphi(\lie)$ of a $\sigma$-morphism $\varphi: \lie \to \lieh$  is a subalgebra of $\lieh$ and the kernel forms an ideal of $\lie$. For us, the following fact about linearly independent vectors is the most important one.
\begin{Lem}
	\label{basis}
Let $\varphi: V \to W$ be a $\sigma$-isomorphism between vector spaces. If $X_1, \ldots, X_n$ are linearly independent in $V$, then the images $\varphi(X_1), \ldots, \varphi(X_n)$ are linearly independent in $W$. 
\end{Lem}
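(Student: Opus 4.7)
The plan is to mimic the standard proof that an isomorphism of vector spaces preserves linear independence, with the only subtlety being that the coefficients of a would-be dependence among the $\varphi(X_i)$ live in $W$ and must be pulled back through the $\sigma$-twist.

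Concretely, I would start with an arbitrary linear relation
\[ \sum_{i=1}^n \lambda_i \, \varphi(X_i) = 0 \quad \text{in } W, \]
with $\lambda_i \in E$, and aim to deduce $\lambda_i = 0$ for all $i$. Since $\sigma \colon E \to E$ is a field automorphism it is surjective, so for each $i$ there is a unique $\mu_i \in E$ with $\sigma(\mu_i) = \lambda_i$. Substituting and invoking the $\sigma$-linearity of $\varphi$ repeatedly (this is where $\sigma$-linearity is essential, as opposed to ordinary linearity) gives
\[ 0 = \sum_{i=1}^n \sigma(\mu_i)\, \varphi(X_i) = \varphi \!\left( \sum_{i=1}^n \mu_i X_i \right). \]

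Now $\varphi$ is bijective, hence injective, so $\sum_{i=1}^n \mu_i X_i = 0$ in $V$. The linear independence of $X_1, \ldots, X_n$ forces $\mu_i = 0$ for every $i$, and then $\lambda_i = \sigma(\mu_i) = \sigma(0) = 0$, which is exactly what we needed.

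There is essentially no hard step here; the only place where carelessness could enter is forgetting that $\sigma$-linearity twists scalars by $\sigma$ rather than by $\sigma^{-1}$, which is why one must rewrite the $\lambda_i$ as $\sigma(\mu_i)$ (using surjectivity of $\sigma$) before pulling the sum inside $\varphi$. One could alternatively apply $\varphi^{-1}$ to the relation and invoke Lemma \ref{compsigma} to know that $\varphi^{-1}$ is $\sigma^{-1}$-linear, arriving at the same conclusion; both routes are equally short.
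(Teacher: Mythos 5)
Your proof is correct and is essentially the paper's argument: the paper applies $\varphi^{-1}$ (which is $\sigma^{-1}$-linear) to the relation to get $\sum_i \sigma^{-1}(\lambda_i)X_i = 0$, which is the same computation as your substitution $\lambda_i = \sigma(\mu_i)$ followed by pulling the sum inside $\varphi$. The alternative route you mention at the end is precisely the one the paper takes.
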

\begin{proof} Assume that $$ \sum_{i=1}^n \lambda_i \varphi (X_i) = 0$$ for some $\lambda_i \in E$. By applying the map $\varphi ^{-1}$ we get that $ \sum_{i=1}^n \sigma^{-1}(\lambda_i) X_i = 0$. Hence we obtain that $\sigma^{-1}(\lambda_i) = 0$ since the $X_i$ are linearly independent and consequently $\lambda_i = 0$ for all $i$. We conclude that the vectors $\varphi(X_1), \ldots, \varphi(X_n)$ are linearly independent.
\end{proof} 
In particular if $\varphi: V \to W$ is a $\sigma$-isomorphism between vector spaces, then $\dim_E(V) = \dim_E(W)$. This is a property which we will use further on.

\paragraph{Construction of conjugate Lie algebras}

Given a Lie algebra $\lie$ over the field $E$ and $\sigma \in \Aut(E)$ an automorphism, we construct a Lie algebra $\lie^\sigma$ and a $\sigma$-isomorphism $\varphi^{\sigma}: \lie \to \lie^\sigma$ between Lie algebras.

First, let us describe the map between the vector spaces. Note that the automorphism $\sigma: E \to E$ induces a $\sigma$-isomorphism $\varphi^{\sigma}: E^n \to E^n$ between vector spaces by applying $\sigma$ to every entry, so $$\varphi^{\sigma}(e_1, \ldots, e_n) = (\sigma(e_1), \ldots, \sigma(e_n))$$ for all $(e_1, \ldots, e_n) \in E^n$. After fixing a vector space basis $X_1, \ldots, X_n$ for the Lie algebra $\lie$, we can identify $\lie$ with $E^n$ and hence also find a $\sigma$-isomorphism $\varphi^{\sigma}: \lie \to \lie$ on the vector space $\lie$. 

To make $\varphi^{\sigma}$ into a $\sigma$-morphism, we need to adapt the Lie bracket on $\lie$ accordingly to define a new Lie algebra $\lie^\sigma$. As a vector space, $\lie^\sigma$ is just the Lie algebra $\lie$, but the Lie bracket $[\hspace{0.3mm} \cdot , \hspace{0.3mm} \cdot]^\sigma: \lie^\sigma \times \lie^\sigma \to \lie^\sigma$ is given by

\begin{align*}
[X,Y]^\sigma = \varphi^{\sigma}\left( \left[\left(\varphi^{\sigma}\right)^{-1}(X),\left(\varphi^{\sigma}\right)^{-1}(Y) \right] \right).
\end{align*}
The map $[\hspace{0.3mm} \cdot , \hspace{0.3mm} \cdot]^\sigma: \lie^\sigma \times \lie^\sigma \to \lie^\sigma$ is linear over $E$, since
\begin{align*}
\left[\lambda X,Y\right]^\sigma &= \varphi^{\sigma}\left( \left[\left(\varphi^{\sigma}\right)^{-1}( \lambda X),\left(\varphi^{\sigma}\right)^{-1}(Y) \right] \right)\\  &= \varphi^{\sigma}\left(\left[ \sigma^{-1}(\lambda) \left(\varphi^{\sigma}\right)^{-1}(X),\left(\varphi^{\sigma}\right)^{-1}(Y) \right] \right) \\ &= \lambda \varphi^{\sigma}\left(\left[ \left(\varphi^{\sigma}\right)^{-1}(X),\left(\varphi^{\sigma}\right)^{-1}(Y) \right] \right) = \lambda [X,Y]^\sigma.
\end{align*} It is immediate that $\left[X,X\right]^\sigma = 0$ for all $X \in \lie^\sigma$. Moreover the Jacobi identity on $\lie^\sigma$ also follows from the Jacobi identity on $\lie$ since 
\begin{align*}
\left[X,\left[Y,Z\right]^\sigma\right]^\sigma = \varphi^{\sigma}\left( \left[\left(\varphi^{\sigma}\right)^{-1}(X),\left[\left(\varphi^{\sigma}\right)^{-1}(Y),\left(\varphi^{\sigma}\right)^{-1}(Z) \right]\right] \right).
\end{align*}

\noindent We conclude that $[\hspace{0.3mm} \cdot , \hspace{0.3mm} \cdot]^\sigma$ is a well-defined Lie bracket on $\lie^\sigma$ and that the map $\varphi^{\sigma}: \lie \to \lie^\sigma$ is a $\sigma$-isomorphism by construction. 

Note that the definition of the conjugate Lie algebra $\lie^\sigma$ and the $\sigma$-isomorphism $\varphi^\sigma$ depends on the initial choice of basis $X_1, \ldots, X_n$, which determines the action of $\sigma \in \Aut(E)$ on the Lie algebra $\lie$. In the next proposition we show that by changing the basis $X_1, \ldots, X_n$, we get a Lie algebra which is isomorphic.  

\begin{Prop}
	\label{uniquesigma}
Let $\varphi_1: \lie \to \lieh_1$ and $\varphi_2: \lie \to \lieh_2$ be two $\sigma$-isomorphisms between Lie algebras over the field $E$. There exists a unique isomorphism $\psi: \lieh_1 \to \lieh_2$ such that $\psi \comp \varphi_1 = \varphi_2$.
\end{Prop}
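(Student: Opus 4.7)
The plan is to construct $\psi$ explicitly as $\psi := \varphi_2 \comp \varphi_1^{-1}$, verify that this is genuinely a Lie algebra isomorphism (i.e.~$E$-linear, not merely semilinear), and then derive uniqueness from the fact that $\varphi_1$ is surjective.

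First I would apply Lemma \ref{compsigma} to $\varphi_1$: since $\varphi_1$ is a $\sigma$-isomorphism of Lie algebras, its inverse $\varphi_1^{-1}: \lieh_1 \to \lie$ is a $\sigma^{-1}$-isomorphism of Lie algebras. Composing with the $\sigma$-isomorphism $\varphi_2$, Lemma \ref{compsigma} yields that $\psi = \varphi_2 \comp \varphi_1^{-1}$ is a $(\sigma \comp \sigma^{-1})$-isomorphism between Lie algebras; but $\sigma \comp \sigma^{-1} = \I_E$, so $\psi$ is an $\I_E$-isomorphism, which by definition is just an ordinary $E$-linear Lie algebra isomorphism $\lieh_1 \to \lieh_2$. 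By construction $\psi \comp \varphi_1 = \varphi_2 \comp \varphi_1^{-1} \comp \varphi_1 = \varphi_2$, so existence is settled.

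For uniqueness, suppose that $\psi' : \lieh_1 \to \lieh_2$ is any (not necessarily linear) map satisfying $\psi' \comp \varphi_1 = \varphi_2$. Since $\varphi_1$ is bijective, every element of $\lieh_1$ has the form $\varphi_1(X)$ for a unique $X \in \lie$, and on such an element we must have $\psi'(\varphi_1(X)) = \varphi_2(X) = \psi(\varphi_1(X))$. Hence $\psi' = \psi$.

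I do not foresee a real obstacle here; the only conceptual point to watch is that the composition $\sigma \comp \sigma^{-1}$ of the two field automorphisms collapses to the identity, so the semilinearity of the two individual maps cancels out and $\psi$ lands in the category of honest $E$-linear Lie algebra morphisms. Everything else is a direct application of Lemma \ref{compsigma}.
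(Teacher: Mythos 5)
Your proof is correct and follows essentially the same route as the paper: both construct $\psi = \varphi_2 \comp \varphi_1^{-1}$, invoke Lemma \ref{compsigma} to see that the semilinearities cancel and $\psi$ is an honest $E$-linear Lie algebra isomorphism, and deduce uniqueness from the bijectivity of $\varphi_1$. Your write-up is in fact slightly more explicit than the paper's about why $\psi$ is $\I_E$-linear.
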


\begin{proof}
Consider the composition $\psi = \varphi_2 \comp \varphi_1^{-1}: \lieh_1 \to \lieh_2$, then Lemma \ref{compsigma} implies that this is a linear map over $E$. Since the condition also implies that it is bijective and preserves the Lie bracket, we conclude that $\psi$ is in fact an isomorphism and $\varphi_2 = \psi \comp \varphi_1$ by construction. Uniqueness follows immediately, since the maps $\varphi_1$ and $\varphi_2$ are bijections.
\end{proof}

\begin{Cor}
The $\sigma$-conjugate Lie algebra $\lie^\sigma$ and the $\sigma$-isomorphism $\varphi^\sigma: \lie \to \lie^\sigma$ are defined up to isomorphism and do not depend on the choice of basis $X_1, \ldots, X_n$ for $\lie$.
\end{Cor}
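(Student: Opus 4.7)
The plan is to deduce this corollary as a direct application of Proposition \ref{uniquesigma}. Suppose we pick two different vector space bases $X_1, \ldots, X_n$ and $X_1', \ldots, X_n'$ for $\lie$. Running the construction above with each basis yields two Lie algebras, call them $\lieh_1$ and $\lieh_2$, together with $\sigma$-isomorphisms $\varphi_1: \lie \to \lieh_1$ and $\varphi_2: \lie \to \lieh_2$ that were built into the construction. Both constructions produce genuine Lie brackets and genuine $\sigma$-isomorphisms, as was verified in the paragraphs preceding Proposition \ref{uniquesigma}, so the hypotheses of that proposition are met with the same source Lie algebra $\lie$ and the same automorphism $\sigma$.

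Applying Proposition \ref{uniquesigma} then yields a unique Lie algebra isomorphism $\psi: \lieh_1 \to \lieh_2$ with $\psi \comp \varphi_1 = \varphi_2$. This is exactly the statement that $\lie^\sigma$ is defined up to isomorphism and that the associated $\sigma$-isomorphism $\varphi^\sigma$ is compatible with this identification: the two candidates for $\varphi^\sigma$ coincide after transporting through $\psi$. In particular, any intrinsic property of the pair $(\lie^\sigma, \varphi^\sigma)$ that is preserved under Lie algebra isomorphisms conjugating the $\sigma$-isomorphisms is independent of the basis choice.

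There is essentially no obstacle here, since the real work was done in setting up the construction (verifying that $[\,\cdot\,,\,\cdot\,]^\sigma$ is $E$-linear, alternating, and satisfies the Jacobi identity) and in Proposition \ref{uniquesigma} itself. The only subtlety worth flagging is to make clear that the corollary asserts two things simultaneously, namely the uniqueness of $\lie^\sigma$ up to isomorphism and the compatibility of the $\sigma$-isomorphisms $\varphi^\sigma$; both are packaged into the single commutative triangle $\psi \comp \varphi_1 = \varphi_2$ produced by the proposition, which is why no further argument is required.
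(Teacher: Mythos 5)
Your argument is correct and is exactly the one the paper intends: the corollary is stated immediately after Proposition \ref{uniquesigma} precisely because applying that proposition to the two $\sigma$-isomorphisms arising from two basis choices yields the unique compatible isomorphism $\psi$ with $\psi \comp \varphi_1 = \varphi_2$. No further comment is needed.
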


Hence from now on, we will just talk about the $\sigma$-conjugate of a Lie algebra over $E$ without mentioning the choice of basis $X_1, \ldots, X_n$. We will denote the Lie bracket on $\lie^\sigma$ as $[\hspace{0.3mm} \cdot , \hspace{0.3mm} \cdot]$ for simplicity.

\begin{Def}
Let $\lie$ be a Lie algebra over the field $E$. If $\sigma \in \Aut(E)$ is an automorphism of the field $E$, we say that $\lie^\sigma$ is the \textbf{$\sigma$-conjugate} Lie algebra of $\lie$ if there exists a $\sigma$-isomorphism $\varphi^{\sigma}: \lie \to \lie^\sigma$. We call $\lie$ and $\lieh$ conjugate over a subfield $F \subseteq E$ if there exists a $\sigma \in \Aut(E,F)$ such that $\lieh$ is the $\sigma$-conjugate of $\lie$.
\label{conjugatedef}
\end{Def}

If $\lie$ is a Lie algebra over $E$ and $X_1, \ldots, X_n$ are a basis for $\lie$, then we define the \textbf{structure constants} of $\lie$ as the elements $c_{ij}^k \in E$ such that $$[X_i,X_j] = \sum_{k=1}^n c_{ij}^k X_k.$$ Given the structure constants of the Lie algebra $\lie$ for the basis vectors $X_1, \ldots, X_n$, there is an easy way to compute the structure constants of $\lie^\sigma$ for the basis $\varphi^{\sigma}(X_1), \ldots, \varphi^{\sigma}(X_n)$. 

\begin{Ex}
	\label{conjugatestructureconstants}
	Let $X_1, \ldots, X_n$ be a basis for $\lie$ with the structure constants $c_{ij}^k$. For every $\sigma$-isomorphism $\varphi^\sigma: \lie \to \lie^\sigma$ the vectors $\varphi^\sigma(X_i)$ form a basis for $\lie^\sigma$ by Lemma \ref{basis}. If we compute the structure constants for the Lie bracket on $\lie^\sigma$, we get
	\begin{align*}
	[\varphi^\sigma(X_i),\varphi^\sigma(X_j)] &= \varphi^\sigma \left( \left[X_i,X_j \right] \right) \\ &= \varphi^{\sigma}\left( \sum_{k=1}^n c_{ij}^k X_k \right) \\ &= \sum_{k=1}^n \sigma(c_{ij}^k) \varphi^\sigma \left(X_k\right). \end{align*}
	Hence the structure constants of $\lie^\sigma$ for the basis $\varphi^{\sigma}(X_1), \ldots, \varphi^{\sigma}(X_n)$ are equal to $\sigma(c_{ij}^k)$.
\end{Ex}
Alternatively, we could have defined the $\sigma$-conjugate Lie algebra $\lie^\sigma$ from the action of $\sigma$ on the structure constants as in Example \ref{conjugatestructureconstants}. However, since the $\sigma$-isomorphism $\varphi^{\sigma}: \lie \to \lie^\sigma$ plays such an important role in the remaining part of this paper, we defined conjugate Lie algebras by the existence of the map $\varphi^\sigma$ as in Definition \ref{conjugatedef}.

\paragraph{Properties of conjugate Lie algebras:}
In this part, we introduce some properties about conjugate Lie algebras which we will use in the following parts. 

The first lemma shows that being conjugate over $F$ is a transitive relation. 
\begin{Lem}
	\label{twoconjugate}
	Let $\lie_1, \hspace{0,5mm} \lie_2$ and $\lie_3$ Lie algebra over the field $E$ and $F \subseteq E$ a subfield. Let $\sigma, \tau \in \Aut(E,F)$ be such that $\lie_1$ is $\sigma$-conjugate to $\lie_2$ and $\lie_2$ is $\tau$-conjugate to $\lie_3$, then $\lie_1$ is $\tau \comp \sigma$-conjugate to $\lie_3$. 
\end{Lem}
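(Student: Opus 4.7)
The plan is to reduce the statement directly to Lemma \ref{compsigma} together with the definition of a $\sigma$-conjugate Lie algebra. By hypothesis and Definition \ref{conjugatedef}, I get two concrete maps to work with: a $\sigma$-isomorphism $\varphi^\sigma : \lie_1 \to \lie_2$ witnessing that $\lie_2$ is the $\sigma$-conjugate of $\lie_1$, and a $\tau$-isomorphism $\varphi^\tau : \lie_2 \to \lie_3$ witnessing that $\lie_3$ is the $\tau$-conjugate of $\lie_2$. The natural candidate for a $(\tau \comp \sigma)$-isomorphism $\lie_1 \to \lie_3$ is simply the composition $\varphi^\tau \comp \varphi^\sigma$.

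First I would verify that this composition is indeed a $(\tau \comp \sigma)$-isomorphism of Lie algebras. But this is exactly the content of Lemma \ref{compsigma}: since each of $\varphi^\sigma$ and $\varphi^\tau$ is in particular a $\sigma$-morphism resp.\ $\tau$-morphism of Lie algebras and both are bijections, the composition $\varphi^\tau \comp \varphi^\sigma$ is a $(\tau \comp \sigma)$-isomorphism between the Lie algebras $\lie_1$ and $\lie_3$. Hence by Definition \ref{conjugatedef}, $\lie_3$ is the $(\tau \comp \sigma)$-conjugate of $\lie_1$.

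Finally I would note the small point that the resulting automorphism lies in $\Aut(E,F)$: if $\sigma, \tau \in \Aut(E,F)$, then for every $x \in F$ we have $(\tau \comp \sigma)(x) = \tau(\sigma(x)) = \tau(x) = x$, so $\tau \comp \sigma \in \Aut(E,F)$. This shows that $\lie_1$ and $\lie_3$ are conjugate over $F$ in the sense of Definition \ref{conjugatedef}, which completes the argument.

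There is no real obstacle here; the lemma is essentially a bookkeeping exercise once Lemma \ref{compsigma} is in hand. The only thing to keep in mind is that the definition of conjugacy is phrased in terms of the existence of a $\sigma$-isomorphism rather than in terms of the concrete construction $\lie \mapsto \lie^\sigma$, so no unfolding of structure constants or change of basis is needed.
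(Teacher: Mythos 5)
Your proof is correct and follows exactly the same route as the paper: take the $\sigma$- and $\tau$-isomorphisms given by Definition \ref{conjugatedef}, compose them, and apply Lemma \ref{compsigma} to conclude that $\varphi^\tau \comp \varphi^\sigma$ is a $\tau \comp \sigma$-isomorphism. The extra remark that $\tau \comp \sigma \in \Aut(E,F)$ is a harmless addition that the paper leaves implicit.
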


\begin{proof}
	By definition, there exist $\varphi^{\sigma}: \lie_1 \to \lie_2$ and $\varphi^{\tau}: \lie_2 \to \lie_3$ which are a $\sigma$- and $\tau$-isomorphism between Lie algebras, respectively. By Lemma \ref{compsigma} the composition $$\varphi^{\tau} \comp \varphi^{\sigma}: \lie_1 \to \lie_3$$ is a $\tau \comp \sigma$-isomorphism and hence the lemma holds.
\end{proof}

The next result we will use is that the conjugates of an indecomposable Lie algebra are again indecomposable. To prove this, we first study the conjugates of direct sums.

\begin{Lem}
	\label{sumsigma}
	Let $\lie$ and $\lieh$ be Lie algebras over the field $E$ and $\sigma: E \to E$ an automorphism, then there is an isomorphism between the Lie algebras $$\left( \lie \oplus \lieh \right)^\sigma \approx \lie^\sigma \oplus \lieh^\sigma.$$
\end{Lem}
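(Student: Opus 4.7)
The plan is to construct a $\sigma$-isomorphism directly between $\lie \oplus \lieh$ and $\lie^\sigma \oplus \lieh^\sigma$ and then invoke the uniqueness result for $\sigma$-conjugates from Proposition \ref{uniquesigma} to conclude that the latter is isomorphic to $(\lie \oplus \lieh)^\sigma$. This avoids having to pick a basis for the direct sum and chase structure constants through the conjugation.

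Concretely, I would start from the $\sigma$-isomorphisms $\varphi^{\sigma}_{\lie}: \lie \to \lie^\sigma$ and $\varphi^{\sigma}_{\lieh}: \lieh \to \lieh^\sigma$ given by the construction of conjugate Lie algebras, and define
\[
\Phi: \lie \oplus \lieh \longrightarrow \lie^\sigma \oplus \lieh^\sigma : (X,Y) \mapsto \bigl( \varphi^{\sigma}_{\lie}(X), \varphi^{\sigma}_{\lieh}(Y) \bigr).
\]
The first routine check is that $\Phi$ is $\sigma$-linear: this follows componentwise from the $\sigma$-linearity of $\varphi^{\sigma}_{\lie}$ and $\varphi^{\sigma}_{\lieh}$. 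Bijectivity of $\Phi$ is immediate from bijectivity of its two components, with inverse $(U,V) \mapsto \bigl((\varphi^{\sigma}_{\lie})^{-1}(U),(\varphi^{\sigma}_{\lieh})^{-1}(V)\bigr)$.

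Next I would verify that $\Phi$ preserves the Lie bracket. Since the bracket on a direct sum vanishes between the two summands and agrees with the individual brackets on each summand, one has $[(X_1,Y_1),(X_2,Y_2)] = ([X_1,X_2]_\lie,[Y_1,Y_2]_\lieh)$; applying $\Phi$ and using that each $\varphi^{\sigma}$ is a $\sigma$-morphism reduces this to the same check inside each summand, which holds by construction. This shows that $\Phi$ is a $\sigma$-isomorphism of Lie algebras, so by Definition \ref{conjugatedef} the Lie algebra $\lie^\sigma \oplus \lieh^\sigma$ is a $\sigma$-conjugate of $\lie \oplus \lieh$.

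Finally, fixing any $\sigma$-isomorphism $\varphi^{\sigma}: \lie \oplus \lieh \to (\lie \oplus \lieh)^\sigma$ coming from the general construction, Proposition \ref{uniquesigma} applied to the two $\sigma$-isomorphisms $\varphi^{\sigma}$ and $\Phi$ out of the same domain $\lie \oplus \lieh$ yields an honest $E$-linear Lie algebra isomorphism $(\lie \oplus \lieh)^\sigma \approx \lie^\sigma \oplus \lieh^\sigma$, which is exactly the statement. There is no real obstacle here; the only subtlety is not to try to build the isomorphism as a linear map directly (it cannot be $E$-linear when viewed naively through the $\sigma$-twisting), but to build it as a $\sigma$-isomorphism and let Proposition \ref{uniquesigma} convert it into the desired ordinary isomorphism.
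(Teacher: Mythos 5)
Your proof is correct and follows essentially the same route as the paper: both construct the componentwise map $\varphi^{\sigma}_{\lie} \oplus \varphi^{\sigma}_{\lieh}$, observe that it is a $\sigma$-isomorphism onto $\lie^\sigma \oplus \lieh^\sigma$, and conclude via the uniqueness of the $\sigma$-conjugate (Proposition \ref{uniquesigma}). You simply spell out the verifications that the paper leaves as "the natural map is again a $\sigma$-isomorphism."
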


\begin{proof}
	Let $\varphi^{\sigma}: \lie \to \lie^\sigma$ and $\psi^{\sigma}: \lieh \to \lieh^\sigma$ be $\sigma$-isomorphisms, which exist by the definition of $\sigma$-conjugate Lie algebra. The natural map $\varphi^{\sigma} \oplus \psi^{\sigma}: \lie \oplus \lieh \to \lie^\sigma \oplus \lieh^\sigma$ is again a $\sigma$-isomorphism, so in particular Definition \ref{conjugatedef} implies that the lemma holds.
\end{proof}

\begin{Cor}
	\label{conjindecomposable}
	Let $\lie$ and $\lieh$ be Lie algebras over the field $E$ which are conjugate, then $\lie$ is indecomposable if and only if $\lieh$ is indecomposable.
\end{Cor}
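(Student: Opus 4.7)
The plan is to deduce the corollary directly from Lemma \ref{sumsigma} together with the dimension-preserving property of $\sigma$-isomorphisms coming from Lemma \ref{basis}, after first observing that the relation of being conjugate is symmetric.

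First I would reduce the equivalence to a single direction. By Lemma \ref{compsigma}, the inverse of a $\sigma$-isomorphism is a $\sigma^{-1}$-isomorphism, so if $\lieh$ is the $\sigma$-conjugate of $\lie$ then $\lie$ is the $\sigma^{-1}$-conjugate of $\lieh$. Hence it is enough to show that if $\lie$ is decomposable, then so is $\lieh \approx \lie^\sigma$; applying this with $\sigma$ replaced by $\sigma^{-1}$ gives the other implication, so that the contrapositive yields the equivalence of indecomposability.

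So assume $\lie = \lie_1 \oplus \lie_2$ with $\lie_1, \lie_2$ proper ideals. Lemma \ref{sumsigma} immediately gives
\[
\lie^\sigma \approx \lie_1^\sigma \oplus \lie_2^\sigma,
\]
and to conclude that $\lie^\sigma$ is decomposable it remains to check that each $\lie_i^\sigma$ is again a proper ideal of $\lie^\sigma$, i.e.\ that $0 < \dim_E \lie_i^\sigma < \dim_E \lie^\sigma$. This is precisely what Lemma \ref{basis} supplies: the $\sigma$-isomorphism $\varphi^\sigma \colon \lie_i \to \lie_i^\sigma$ sends any basis of $\lie_i$ to a linearly independent family, so $\dim_E \lie_i^\sigma = \dim_E \lie_i$, and similarly $\dim_E \lie^\sigma = \dim_E \lie$. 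The strict inequalities $0 < \dim_E \lie_i < \dim_E \lie$ therefore transfer to the $\sigma$-conjugates, proving that $\lie^\sigma$ is decomposable.

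I do not foresee a genuine obstacle here; the work has already been done in Lemmas \ref{compsigma}, \ref{basis} and \ref{sumsigma}. The only point requiring a moment's care is that \textquotedblleft proper\textquotedblright{} must be verified after conjugation, which is exactly where the dimension count via Lemma \ref{basis} is needed.
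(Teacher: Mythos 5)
Your proof is correct and follows essentially the same route as the paper, which deduces the corollary in one line from Lemma \ref{sumsigma}; you merely spell out the details the paper calls immediate (the symmetry of conjugacy via Lemma \ref{compsigma} and the preservation of properness via the dimension count from Lemma \ref{basis}). Nothing to change.
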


\begin{proof}
	This follows immediately from Lemma \ref{sumsigma}, since if $\lie$ is isomorphic to a non-trivial direct sum of Lie algebras, then so is $\lieh$.
\end{proof}

The next proposition on Lie algebras which are defined over a subfield $F \subseteq E$ is crucial for positive answers on Question \ref{generalQN}.
\begin{Prop}
	\label{Fform}
Let $\lie$ be a Lie algebra over $E$ which is defined over a subfield $F \subseteq E$, then every Lie algebra $\lieh$ which is conjugate over $F$ to $\lie$ is isomorphic to $\lie$. 
\end{Prop}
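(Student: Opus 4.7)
The plan is to reduce the statement to a computation with structure constants, using Example \ref{conjugatestructureconstants} and the fact that $\sigma \in \Aut(E,F)$ fixes $F$ pointwise.

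First, I would unpack the hypothesis: since $\lie$ is defined over $F$, there is a Lie algebra $\lie_0$ over $F$ with $\lie \approx \lie_0 \otimes_F E$. Pick an $F$-basis $Y_1, \ldots, Y_n$ of $\lie_0$ and set $X_i = Y_i \otimes 1$; this is an $E$-basis of $\lie$ whose structure constants $c_{ij}^k$, defined by $[X_i, X_j] = \sum_k c_{ij}^k X_k$, automatically lie in $F$, because the brackets $[Y_i, Y_j]$ already take values in the $F$-span of the $Y_k$.

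Next I would bring in the conjugate. Let $\sigma \in \Aut(E,F)$ be any automorphism for which $\lieh$ is $\sigma$-conjugate to $\lie$, and let $\varphi^\sigma: \lie \to \lie^\sigma$ be a $\sigma$-isomorphism. By Lemma \ref{basis} the vectors $\varphi^\sigma(X_1), \ldots, \varphi^\sigma(X_n)$ form an $E$-basis of $\lie^\sigma$, and by Example \ref{conjugatestructureconstants} the structure constants of $\lie^\sigma$ in this basis are $\sigma(c_{ij}^k)$. But $c_{ij}^k \in F$ and $\sigma$ fixes $F$, so $\sigma(c_{ij}^k) = c_{ij}^k$. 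The two Lie algebras $\lie$ and $\lie^\sigma$ therefore carry the same structure constants in corresponding bases, and the $E$-linear map defined by $X_i \mapsto \varphi^\sigma(X_i)$ is an isomorphism of Lie algebras $\lie \to \lie^\sigma$.

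Finally I would invoke Proposition \ref{uniquesigma} to conclude: any two $\sigma$-conjugates of $\lie$ are isomorphic as $E$-Lie algebras, so $\lieh \approx \lie^\sigma \approx \lie$. There is no real obstacle here; the only thing to be careful about is to justify the existence of a basis of $\lie$ whose structure constants lie in $F$, which is exactly what "defined over $F$" buys us via the tensor product construction. The entire argument rests on this translation between $F$-forms and $F$-valued structure constants, together with the fact that fixing $F$ pointwise is built into the definition of $\Aut(E,F)$.
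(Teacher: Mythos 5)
Your proposal is correct and follows essentially the same route as the paper: pick a basis of $\lie$ with structure constants in $F$, observe via Example \ref{conjugatestructureconstants} that the $\sigma$-conjugate has structure constants $\sigma(c_{ij}^k) = c_{ij}^k$, and conclude the conjugate is isomorphic to $\lie$. The only cosmetic difference is that the paper takes the $\sigma$-isomorphism directly onto $\lieh$, whereas you pass through the standard model $\lie^\sigma$ and then use Proposition \ref{uniquesigma}; both are fine.
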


\begin{proof}
	Since $\lie$ is defined over $F$, there exists a basis $X_1, \ldots, X_n$ for $\lie$ such that the structure constants $c_{ij}^k \in F$. Take $\sigma \in \Aut(E,F)$ such that there exists a $\sigma$-isomorphism $\varphi^\sigma: \lie \to \lieh$. By Example \ref{conjugatestructureconstants} the structure constants for the basis $\varphi^\sigma(X_i)$ are equal to $$\sigma \left(c_{ij}^k \right) = c_{ij}^k \in F $$ and hence $\lieh$ is isomorphic to $\lie$.
\end{proof} 
\noindent Although Proposition \ref{Fform} is elementary, we can use it to show that certain Lie algebras are not defined over a field $F$ by showing that there exists $\sigma \in \Aut(E,F)$ such that $\lie^\sigma \not \approx \lie$, as we illustrate in Proposition \ref{overFprop}.

The following theorem indicates why we are interested in conjugate Lie algebras for studying underlying Lie algebras, in particular for answering Question \ref{Q5}.

\begin{Thm}
	\label{sameunder}
Let $\lie$ and $\lieh$ be Lie algebras over $E$ which are conjugate over a subfield $F \subseteq E$, then the underlying Lie algebras over $F$ are isomorphic, so $$\lie_F \approx \lieh_F.$$
\end{Thm}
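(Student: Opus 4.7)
The plan is to obtain the required isomorphism $\lie_F \to \lieh_F$ as the map induced by the $\sigma$-isomorphism given by the hypothesis of conjugacy, using Lemma \ref{morphoverF} to promote this into an honest $F$-linear Lie algebra isomorphism.

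More concretely, I would proceed as follows. By Definition \ref{conjugatedef}, the assumption that $\lie$ and $\lieh$ are conjugate over $F$ means there exists some $\sigma \in \Aut(E,F)$ and a $\sigma$-isomorphism $\varphi^{\sigma}: \lie \to \lieh$ between Lie algebras over $E$. The key point is that the constraint $\sigma \in \Aut(E,F)$ ensures $\sigma$ fixes the subfield $F$ pointwise, which is exactly the hypothesis under which Lemma \ref{morphoverF} applies. Applying that lemma to $\varphi^{\sigma}$ yields a map $(\varphi^{\sigma})_F : \lie_F \to \lieh_F$ which is $F$-linear, preserves the Lie bracket, and is bijective (since $\varphi^{\sigma}$ itself is bijective, and passing to the underlying sets does not alter the map as a set-theoretic map). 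Hence $(\varphi^{\sigma})_F$ is an isomorphism of Lie algebras over $F$, giving $\lie_F \approx \lieh_F$.

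There is no real obstacle here: the entire content of the theorem has been packaged into the preceding lemmas, and the proof amounts essentially to citing Lemma \ref{morphoverF} with the correct input. The only subtle point worth highlighting is why the hypothesis $\sigma \in \Aut(E,F)$ (rather than merely $\sigma \in \Aut(E)$) is necessary: without fixing $F$, the restriction $\varphi^{\sigma}: \lie_F \to \lieh_F$ would not be $F$-linear, since $\varphi^{\sigma}(\lambda X) = \sigma(\lambda) \varphi^{\sigma}(X)$ would fail to equal $\lambda \varphi^{\sigma}(X)$ for $\lambda \in F$ unless $\sigma|_F = \id_F$.
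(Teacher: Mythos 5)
Your proof is correct and is essentially identical to the paper's own argument: both take the $\sigma$-isomorphism $\varphi^{\sigma}\colon \lie \to \lieh$ supplied by Definition \ref{conjugatedef} and apply Lemma \ref{morphoverF} to obtain an $F$-linear Lie algebra isomorphism $\lie_F \to \lieh_F$. Your added remark on why $\sigma$ must fix $F$ pointwise is a nice clarification but not a deviation from the paper's route.
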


\begin{proof}
By definition, there exists $\sigma \in \Aut(E,F)$ and a $\sigma$-isomorphism $\varphi^\sigma: \lie \to \lieh$. By Lemma \ref{morphoverF} $\varphi^\sigma$ induces an isomorphism between the vector spaces $\lie_F$ and $\lieh_F$. Since by definition it also preserves the Lie bracket, we conclude that $\lie_F \approx \lieh_F$.
\end{proof}

Thereom \ref{sameunder} suffices to give counterexamples for Question \ref{Q5} and we will do so in Example \ref{diffcomplexconj} in Section \ref{sec:examples}. Before giving this example, we first investigate the algebraic structure of the underlying Lie algebra $\lie_F$, allowing us to give a positive answer to Question \ref{Q5} in some specific cases and describing the number of bi-invariant complex structures on a given real Lie algebra. 

\section{Embedding the underlying Lie algebra}

\label{sec:ans}

In this section we investigate the algebraic structure of the underlying Lie algebra $\lie_F$ depending on the Lie algebra $\lie$. The main goal is to give an answer to Question \ref{generalQN} and determine which Lie algebras have isomorphic underlying Lie algebras. To do this we embed the underlying Lie algebra $\lie_F$ as a $F$-form in a direct sum of conjugate Lie algebras over $F$, see Theorem \ref{sumconjugate} underneath. By applying this result in combination with the decomposition into indecomposable ideals from Theorem \ref{krull}, we give a charactization for Lie algebras to have isomorphic underlying Lie algebra.

Let $\lie$ be a Lie algebra over any field $E$. From now on, we restrict our attention to Galois extensions $F \subseteq E$ of finite degree. The results could also be formulated for general seperable extensions of finite degree, but this would make the statements more technical. This seems unneccesary since every finite degree field extension in characteristic $0$ lies in a Galois extension of finite degree, hence offering a wide range of applications in Section \ref{sec:examples}.

\begin{Thm}
	\label{sumconjugate}
	Let $F \subseteq E$ be a Galois extension and let $\lie$ be a Lie algebra over the field $E$. Let $\lie_F$ be the underlying Lie algebra over $F$, then there is an isomorphism $$ \lie_F \otimes_F E \approx \bigoplus_{\sigma \in \Gal(E,F)} \lie^\sigma $$ where $\lie^\sigma$ denotes the $\sigma$-conjugate of the Lie algebra $\lie$ as in Definition \ref{conjugatedef}. 
\end{Thm}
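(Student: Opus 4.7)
The plan is to construct an explicit $E$-linear Lie algebra isomorphism
$$\Phi : \lie_F \otimes_F E \longrightarrow \bigoplus_{\sigma \in \Gal(E,F)} \lie^\sigma$$
out of the family of $\sigma$-isomorphisms $\varphi^\sigma : \lie \to \lie^\sigma$ from Section~\ref{sec:conj}, and then verify bijectivity.

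First I would exhibit $\Phi$ as the $F$-linear factorization through the tensor product of the map
$$\tilde\Phi : \lie_F \times E \longrightarrow \bigoplus_{\sigma \in \Gal(E,F)} \lie^\sigma, \qquad (X,\lambda) \longmapsto \bigoplus_{\sigma} \lambda \cdot \varphi^\sigma(X),$$
where $\lambda \cdot$ denotes the native $E$-scalar action on the component $\lie^\sigma$. The Galois group enters precisely in the bilinearity check: for $a \in F$ one has $\varphi^\sigma(aX) = \sigma(a)\varphi^\sigma(X) = a\varphi^\sigma(X)$ since $\sigma$ fixes $F$, so $\tilde\Phi$ is $F$-linear in $X$; $F$-linearity in $\lambda$ is immediate.

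Next I would upgrade $\Phi$ from $F$-linear to $E$-linear using that the $E$-action on $\lie_F \otimes_F E$ is carried by the right factor:
$$\Phi(\mu(X \otimes \lambda)) = \Phi(X \otimes \mu\lambda) = \bigoplus_\sigma \mu\lambda\,\varphi^\sigma(X) = \mu\,\Phi(X \otimes \lambda).$$
To see that $\Phi$ respects the Lie bracket, I would use that the bracket on $\lie_F \otimes_F E$ is the $E$-bilinear extension of $[X \otimes \lambda, Y \otimes \mu] = [X,Y] \otimes \lambda\mu$ while the bracket on the direct sum is componentwise; the equality $\Phi([X \otimes \lambda, Y \otimes \mu]) = [\Phi(X \otimes \lambda), \Phi(Y \otimes \mu)]$ then reduces to the fact that each $\varphi^\sigma$ is a $\sigma$-morphism of Lie algebras combined with the $E$-bilinearity of the bracket on $\lie^\sigma$.

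The main obstacle is bijectivity. Both sides have $E$-dimension $[E:F]\dim_E(\lie)$, so it suffices to show injectivity. Picking an $E$-basis $X_1, \ldots, X_n$ of $\lie$ and an $F$-basis $e_1, \ldots, e_d$ of $E$, the vectors $(e_j X_i) \otimes 1$ form an $E$-basis of $\lie_F \otimes_F E$, and
$$\Phi\bigl((e_j X_i) \otimes 1\bigr) = \bigoplus_{\sigma \in \Gal(E,F)} \sigma(e_j)\,\varphi^\sigma(X_i).$$
Since $\varphi^\sigma(X_1), \ldots, \varphi^\sigma(X_n)$ is already an $E$-basis of $\lie^\sigma$ by Lemma~\ref{basis}, the question collapses to invertibility of the single $d \times d$ matrix $M = (\sigma(e_j))_{\sigma,j}$ over $E$. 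This is exactly the step where the Galois hypothesis (as opposed to merely finite separable) is essential: $M$ is invertible by Dedekind's theorem on the linear independence of characters, equivalently by non-degeneracy of the trace pairing, since in the Galois case $M^\top M = \bigl(\trace_{E/F}(e_i e_j)\bigr)$. Once this linear-algebra input is secured, $\Phi$ is an isomorphism and the theorem follows.
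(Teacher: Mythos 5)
Your proof is correct, and it constructs the same map as the paper: your $\Phi$ is precisely the $E$-linear extension of the map $\varphi_F:X\mapsto\bigl(\varphi^\sigma(X)\bigr)_\sigma$, which the paper phrases as ``the image of $\varphi_F$ is an $F$-form of $\bigoplus_\sigma\lie^\sigma$''. The linearity, bracket and dimension checks agree. Where you genuinely diverge is the injectivity step, which is the heart of the theorem. The paper shows directly that $F$-linearly independent vectors of $\lie_F$ remain $E$-linearly independent in the direct sum: it takes a minimal relation $\sum_i\lambda_i\varphi(X_i)=0$, normalizes $\lambda_1=1$, applies the maps $\varphi^{\sigma}\comp\bigl(\varphi^{\tau^{-1}\comp\sigma}\bigr)^{-1}$ to the component equations to obtain the twisted relations $\sum_i\tau(\lambda_i)\varphi^\sigma(X_i)=0$, and deduces from the uniqueness of the minimal relation that each $\lambda_i$ is fixed by every $\tau\in\Gal(E,F)$, hence lies in $F$ --- contradicting independence over $F$. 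You instead choose an $E$-basis of $\lie$ and an $F$-basis of $E$ and reduce the whole question to the invertibility of the matrix $M=(\sigma(e_j))_{\sigma,j}$, settled by Dedekind's independence of characters, equivalently by $M^{\top}M=\bigl(\trace_{E/F}(e_ie_j)\bigr)$ and non-degeneracy of the trace form. Both are standard routes to the isomorphism $E\otimes_FE\cong\prod_\sigma E$; the paper's is basis-free and self-contained, using only the fixed-field property $E^{\Gal(E,F)}=F$, while yours is shorter modulo the classical input and makes transparent that the theorem is really that isomorphism tensored with $\lie$. One small refinement to your aside: non-degeneracy of the trace form holds for every finite separable extension, so the specifically Galois ingredients in your argument are the identity $\trace_{E/F}=\sum_{\sigma\in\Gal(E,F)}\sigma$ and the count $\lvert\Gal(E,F)\rvert=[E:F]$, which make $M$ square.
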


\noindent The techniques in this proof are similar to the ones in \cite[Lemma 2.1]{dere13-1} for constructing rational forms in Lie algebras. 

\begin{proof}
Consider the maps $\varphi^\sigma: \lie \to \lie^\sigma$ for every $\sigma \in \Gal(E,F)$. Define the natural map \begin{align*}
\varphi: \lie &\to \bigoplus_{\sigma \in \Gal(E,F)} \lie^\sigma\\& X \mapsto \Big( \varphi^\sigma(X) \Big)_{\sigma \in \Gal(E,F)} \end{align*}
 which is given by $\varphi^\sigma$ in every component. The induced injective map $$\varphi_F: \lie_F \to \left( \bigoplus_{\sigma \in \Gal(E,F)} \lie^\sigma \right)_F$$ is linear over $F$ and we will show that the image of $\varphi_F$ is an $F$-form of $\left( \oplus_{\sigma \in \Gal(E,F)} \lie^\sigma \right)$ as a Lie algebra over $E$. Note that $$\dim_F(\varphi_F(\lie_F)) = \dim_F(\lie_F) = [E:F] \dim_E(\lie) = \dim_E \left( \oplus_{\sigma \in \Gal(E,F)} \lie^\sigma \right),$$ so it suffices to show that the image under $\varphi_F$ of linearly independent vectors in $\lie_F$ over $F$ are linearly independent in $\bigoplus_{\sigma \in \Gal(E,F)} \lie^\sigma$ over $E$.
 
 To prove this, assume for a contradiction that this is not the case. Take $X_1, \ldots, X_k \in \lie_F$ which are linearly independent over $F$ and such that there exists $\lambda_1, \ldots, \lambda_k \in E$ with at least one $\lambda_i$ non-zero and 
 \begin{align}
 \label{lincom} 0 = \sum_{i=1}^k \lambda_i \varphi(X_i) = \left( \sum_{i=1}^k \lambda_i \varphi^\sigma(X_i) \right)_{\sigma \in \Gal(E,F)} \in \bigoplus_{\sigma \in \Gal(E,F)}\lie^\sigma.
 \end{align} Without loss of generality we can assume that $k$ is the minimal number for which such an example exists and hence it holds that all $\lambda_i \neq 0$. By multiplying by $\lambda^{-1}_1$ if necessary we can assume that $\lambda_1 = 1$. The minimality of $k$ moreover implies that the $\lambda_i$ are unique under the latter assumption. 
 
 Equation (\ref{lincom}) is equivalent, by considering every component seperately, to 
 \begin{align} \label{seplincom} \sum_{i=1}^k \lambda_i \varphi^\sigma(X_i) = 0\end{align} for every $\sigma \in \Gal(E,F)$. We now claim that for every $\sigma, \tau \in \Gal(E,F)$ the equality $$\sum_{i=1}^k \tau (\lambda_i) \varphi^\sigma(X_i) = 0$$ holds. To show this, we apply the map $\varphi^{\sigma} \comp \left(\varphi^{\tau^{-1} \comp \sigma}\right)^{-1}$ to Equation (\ref{seplincom}) for the automorphism $\tau^{-1} \comp \sigma$ and get 
 
 \begin{align*}
 0 &= \varphi^{\sigma} \comp \left(\varphi^{\tau^{-1} \comp \sigma}\right)^{-1} \left( \sum_{i=1}^k \lambda_i \varphi^{\tau^{-1} \comp \sigma}(X_i) \right) \\
 &= \sum_{i=1}^k \varphi^\sigma\left( \sigma^{-1} \left(\tau \left(\lambda_i \right) \right) X_i \right) \\  &= \sum_{i=1}^k \tau(\lambda_i) \varphi^\sigma(X_i) .
 \end{align*} 
 
 Fixing $\tau \in \Gal(E,F)$ and using that the claim holds for all $\sigma \in \Gal(E,F)$, we conclude that $\tau(\lambda_i) = \lambda_i$ by the fact that $\tau(\lambda_1) = \tau(1) = 1$ and the uniqueness of the $\lambda_i$. Because this holds for every $\tau \in \Gal(E,F)$, we conclude that $\lambda_i \in F$, leading to a contradiction to the fact that $X_1, \ldots, X_k$ are linearly dependent over $F$.
\end{proof}

As an application of this result, we get a positive answer on Question \ref{Q5} under some specific assumptions for the Lie algebras $\lie$ and $\lieh$.

\begin{Cor}
	\label{partialanswer}
Let $F \subseteq E$ be a Galois extension and $\lie$ a Lie algebra over $E$ which is defined over the subfield $F$. If $\lieh$ is another Lie algebra over $E$ such that $\lie_F \approx \lieh_F$, then the original Lie algebras $\lie \approx \lieh$ are isomorphic as well if either
\begin{enumerate}
	\item the Lie algebra $\lieh$ is defined over $F$, or
	\item the Lie algebra $\lie$ is indecomposable.
	\end{enumerate}
\end{Cor}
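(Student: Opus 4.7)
The plan is to reduce the problem to comparing two decompositions of the same Lie algebra over $E$ into indecomposable ideals, and then apply the Krull--Schmidt-type uniqueness from Theorem \ref{krull}. First, from $\lie_F \approx \lieh_F$ as Lie algebras over $F$, tensoring with $E$ gives $\lie_F \otimes_F E \approx \lieh_F \otimes_F E$. Applying Theorem \ref{sumconjugate} to both sides I obtain the isomorphism
$$\bigoplus_{\sigma \in \Gal(E,F)} \lie^\sigma \;\approx\; \bigoplus_{\sigma \in \Gal(E,F)} \lieh^\sigma.$$
Since $\lie$ is defined over $F$, Proposition \ref{Fform} yields $\lie^\sigma \approx \lie$ for every $\sigma \in \Gal(E,F)$, so the left-hand side is simply $[E:F]$ copies of $\lie$.

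For case (1), the same argument applied to $\lieh$ produces $\lieh^\sigma \approx \lieh$ for every $\sigma$, so the right-hand side is $[E:F]$ copies of $\lieh$. Hence $\underbrace{\lie \oplus \cdots \oplus \lie}_{[E:F] \text{ times}} \approx \underbrace{\lieh \oplus \cdots \oplus \lieh}_{[E:F] \text{ times}}$, and Proposition \ref{copies} gives $\lie \approx \lieh$.

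For case (2), I fix a decomposition $\lieh = \bigoplus_{j=1}^k \lieh_j$ into indecomposable ideals, so by Lemma \ref{sumsigma} each $\lieh^\sigma \approx \bigoplus_{j=1}^k \lieh_j^\sigma$, and by Corollary \ref{conjindecomposable} each $\lieh_j^\sigma$ is indecomposable. Writing $n = [E:F]$, the right-hand side therefore decomposes into $n \cdot k$ indecomposable ideals, while the left-hand side decomposes into $n$ indecomposable ideals (all isomorphic to the indecomposable $\lie$). Theorem \ref{krull} forces the two counts to match, so $k=1$, i.e.\ $\lieh$ is already indecomposable, and then the same theorem identifies the summands on both sides up to renumbering, giving $\lieh^\sigma \approx \lie$ for every $\sigma$. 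Specializing to $\sigma = \I_E$ yields the conclusion $\lieh \approx \lie$.

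The only place requiring care is case (2), where I must be sure that counting indecomposable summands on both sides is legitimate — this is exactly what Theorem \ref{krull} supplies, once Lemma \ref{sumsigma} and Corollary \ref{conjindecomposable} guarantee that each $\lieh^\sigma$ is itself a direct sum of indecomposable ideals with the same count $k$ as $\lieh$. No further structural input about $\lie$ or $\lieh$ is needed beyond what has already been developed in Sections \ref{sec:prel} and \ref{sec:conj}.
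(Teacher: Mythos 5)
Your proposal is correct and follows essentially the same route as the paper: apply Theorem \ref{sumconjugate} to both sides, use Proposition \ref{Fform} to turn the left-hand side into $[E:F]$ copies of $\lie$, then invoke Proposition \ref{copies} for case (1) and a count of indecomposable summands via Theorem \ref{krull} for case (2). Your case (2) merely spells out the counting argument (forcing $k=1$) that the paper states more tersely.
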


\begin{proof}
	Since $\lie$ is defined over $F$, every conjugate Lie algebra $\lie^\sigma$ is isomorphic to $\lie$, see Proposition \ref{Fform}. By applying Theorem \ref{sumconjugate}, the Lie algebra $\lie_F$ is an $F$-form of $\underbrace{\lie \oplus \ldots \oplus \lie}_{[E:F] \text{ times}}$. We assume that $\lieh$ is a Lie algebra with $\lie_F \approx \lieh_F$. 
	
	If $\lieh$ is defined over $F$, then Theorem \ref{sumconjugate} implies in exactly the same way that $\lieh_F$ is a $F$-form of $\underbrace{\lieh \oplus \ldots \oplus \lieh}_{[E:F] \text{ times}}$. Consequently, by extending the field to $E$, Proposition \ref{copies} implies that $\lie \approx \lieh$.  
	
	\medskip
	
	\noindent If $\lie$ is indecomposable, we find that $$\underbrace{\lie \oplus \ldots \oplus \lie}_{[E:F] \text{ times}} \approx \bigoplus_{\sigma \in \Gal(E,F)} \lieh^\sigma.$$ Since $\lie$ is indecomposable and the number of components correspond, we find that the Lie algebras $\lieh^\sigma$ are also indecomposable. In particular, since $\lieh$ is a indecomposable ideal of the right hand side, it is isomorphic to an indecomposable ideal on the left hand side, so isomorphic to $\lie$.
\end{proof}
\noindent In particular, in the second case of Corollary \ref{partialanswer} we conclude that $\lieh$ is defined over $F$ as well. As we will demonstrate in Example \ref{definedoverF}, the conditions on $\lie$ or $\lieh$ are necessary for this corollary.  

More generally, given two Lie algebras $\lie$ and $\lieh$ over $E$, we will fully characterize when the underlying Lie algebras $\lie_F \approx \lieh_F$ are isomorphic. Observe that $\left( \lie \oplus \lieh \right)_F = \lie_F \oplus \lieh_F$ for all Lie algebras $\lie$ and $\lieh$ over $E$. Together with Theorem \ref{sameunder} this implies that for all $k \in \N$, if $\sigma_i \in \Gal(E,F)$ and $\lie_i$ are Lie algebras over $E$ for $1 \leq i \leq k$, we find that $$\left(\bigoplus_{i=1}^k \lie_i \right)_F \approx \left( \bigoplus_{i=1}^k \lie_i^{\sigma_i} \right)_F.$$ Our next theorem shows that the converse holds as well if the $\lie_i$ are indecomposable. This is hence the best possible result in this direction.

\begin{Thm}
	\label{classification}
Let $F \subseteq E$ be a Galois extension and $\lie$ and $\lieh$ be Lie algebras over $E$ such that $\lie_F \approx \lieh_F$. If we decompose $$\lie \approx \bigoplus_{i=1}^k \lie_i$$ into indecomposable ideals, then there exists $\sigma_i \in \Gal(E,F)$ such that $$\lieh \approx \bigoplus_{i=1}^k \lie_i^{\sigma_i}.$$
\end{Thm}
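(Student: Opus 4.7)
The plan is to reduce the statement to an application of Theorem \ref{krull} (Krull--Schmidt) inside the Lie algebra $\lie_F \otimes_F E$ over $E$. First I would tensor the hypothesis $\lie_F \approx \lieh_F$ with $E$ over $F$ and invoke Theorem \ref{sumconjugate} on both sides to obtain
\[
\bigoplus_{\sigma \in \Gal(E,F)} \lie^\sigma \;\approx\; \bigoplus_{\sigma \in \Gal(E,F)} \lieh^\sigma.
\]
Writing $\lieh \approx \bigoplus_{j=1}^l \lieh_j$ as a direct sum of indecomposable ideals and applying Lemma \ref{sumsigma} componentwise rewrites this as
\[
\bigoplus_{\sigma \in \Gal(E,F)} \bigoplus_{i=1}^k \lie_i^\sigma \;\approx\; \bigoplus_{\sigma \in \Gal(E,F)} \bigoplus_{j=1}^l \lieh_j^\sigma,
\]
which is a decomposition into indecomposables on both sides by Corollary \ref{conjindecomposable}. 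Theorem \ref{krull} then forces $k = l$ (by counting summands) and gives a multiset equality of the two families of isomorphism classes of indecomposable summands.

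The second step turns this multiset equality into a pairing between the $\lie_i$ and the $\lieh_j$ that respects the relation $\mathfrak{a} \sim \mathfrak{b}$ defined on isomorphism classes of indecomposable Lie algebras over $E$ by $\mathfrak{a} \approx \mathfrak{b}^\sigma$ for some $\sigma \in \Gal(E,F)$; this is an equivalence relation in view of Lemma \ref{twoconjugate}, the fact that $\I_E$ plays the role of a neutral element, and the inverse statement in Lemma \ref{compsigma}. For each $\sim$-class $C$ set $a_C = \#\{i : \lie_i \in C\}$ and $b_C = \#\{j : \lieh_j \in C\}$. Since every $\sigma$-conjugate of an indecomposable Lie algebra lies in the same $\sim$-class, each $\lie_i$ with $\lie_i \in C$ contributes exactly $\vert \Gal(E,F) \vert$ summands to the left-hand side that fall in class $C$ (counted with multiplicity), while those with $\lie_i \notin C$ contribute none. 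Hence the multiplicity of class $C$ among the left-hand summands equals $a_C \cdot \vert \Gal(E,F) \vert$, and similarly $b_C \cdot \vert \Gal(E,F) \vert$ on the right; the multiset equality from Theorem \ref{krull} therefore yields $a_C = b_C$ for every class $C$.

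Finally, with the multisets $\{[\lie_i]_\sim\}_{i=1}^k$ and $\{[\lieh_j]_\sim\}_{j=1}^k$ in agreement, I would pick a bijection $\pi: \{1, \ldots, k\} \to \{1, \ldots, k\}$ with $\lie_i \sim \lieh_{\pi(i)}$ for every $i$. Unpacking the relation $\sim$ then furnishes $\sigma_i \in \Gal(E,F)$ with $\lieh_{\pi(i)} \approx \lie_i^{\sigma_i}$, and reshuffling the decomposition of $\lieh$ by $\pi$ gives $\lieh \approx \bigoplus_{i=1}^k \lie_i^{\sigma_i}$, which is the required conclusion. The main obstacle is the multiplicity count in the middle step: the orbit $\{\lie_i^\sigma\}_{\sigma \in \Gal(E,F)}$ can collapse to strictly fewer than $\vert \Gal(E,F) \vert$ distinct isomorphism classes (by Proposition \ref{Fform} this happens, for instance, whenever some $\lie_i$ is defined over a proper intermediate field), so it is essential that the counts $a_C \cdot \vert \Gal(E,F) \vert$ are read as multiplicities inside the Krull--Schmidt decomposition rather than as sizes of orbits.
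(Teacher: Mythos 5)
Your proof is correct, and its first half coincides exactly with the paper's: tensor the hypothesis $\lie_F \approx \lieh_F$ with $E$, apply Theorem \ref{sumconjugate} together with Lemma \ref{sumsigma}, and use Corollary \ref{conjindecomposable} and Theorem \ref{krull} to get an equality of multisets of indecomposable summands and $k = l$. Where you genuinely diverge is in how the pairing is extracted. The paper proceeds by induction on $k$: it matches $\lieh_1$ with some $\lie_j^{\sigma_1}$, notes that $\bigoplus_{\sigma} \lieh_1^{\sigma} \approx \bigoplus_{\sigma} \lie_1^{\sigma}$ by reindexing the Galois group, cancels this block with Proposition \ref{fromsum}, and applies the induction hypothesis. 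You instead introduce the equivalence relation ``conjugate over $F$'' on isomorphism classes of indecomposables and count, for each class $C$, the multiplicity $a_C \cdot \vert \Gal(E,F)\vert$ with which summands from $C$ occur on each side; this yields $a_C = b_C$ for all $C$ in one stroke and the bijection $\pi$ follows. Both routes are sound. Yours dispenses with the induction and with Proposition \ref{fromsum} (which is itself only Theorem \ref{krull} in disguise), at the cost of checking that $\sim$ is an equivalence relation and of the multiplicity bookkeeping you correctly flag at the end: the orbit $\{\lie_i^{\sigma}\}_{\sigma}$ may collapse to fewer isomorphism classes, so the count must be of summands with multiplicity, not of distinct classes. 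The paper's induction hides exactly that bookkeeping inside the cancellation step, so the two arguments are morally dual ways of finishing the same reduction.
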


\begin{proof}
Write $\lie = \bigoplus_{i=1}^k \lie_i$ and $\lieh = \bigoplus_{j=1}^l \lieh_j$ with the ideals $\lie_i, \hspace{0.5mm} \lieh_j$ indecomposable. Since $\lie_F \approx \lieh_F$ and hence also $\lie_F \otimes_F E \approx \lieh_F\otimes_F E$, applying Theorem \ref{sumconjugate} gives us 
\begin{align} \label{induction1} \bigoplus_{i=1}^k \left( \bigoplus_{\sigma \in \Gal(E,F)} \lie_i^\sigma \right) \approx \bigoplus_{j=1}^l \left( \bigoplus_{\sigma \in \Gal(E,F)} \lieh_j^\sigma \right).
\end{align}
Since $\lie_i$ and $\lieh_j$ are indecomposable, we have that for every $\sigma \in \Gal(E,F)$, the Lie algebras $\lie_i^\sigma$ and $\lieh_j^\sigma$ are indecomposable by Corollary \ref{conjindecomposable}. By comparing the number of indecomposable ideals on the left hand side and right hand side of the isomorphism in (\ref{induction1}) we get that $k = l$. We prove the theorem by induction on the number $k$ of indecomposable ideals of $\lie$.

First assume that $k= 1$ or thus that $\lie$ is indecomposable. Since $\lieh$ is an indecomposable component of the right hand side in (\ref{induction1}), Theorem \ref{krull} implies that $\lieh \approx \lie^\sigma$ for some $\sigma \in \Gal(E,F)$, which proves the theorem in this case.

For the induction step, we take $k > 1$. Since $\lieh_1$ is an indecomposable component of the right hand side of (\ref{induction1}), there exists a $1 \leq j \leq k$ and a $\sigma_1 \in \Gal(E,F)$ such that $\lieh_1 \approx \lie_j^{\sigma_1}$. Up to renumbering the ideals of $\lie$, we can assume that $j = 1$. For every $\sigma \in \Gal(E,F)$, we find that $\lieh_1^\sigma \approx \left( \lie_1^{\sigma_1} \right)^\sigma \approx \lie_1^{\sigma \comp \sigma_1}$ by Lemma \ref{twoconjugate}. Hence $$\bigoplus_{\sigma \in \Gal(E,F)} \lieh_1^\sigma \approx \bigoplus_{\sigma \in \Gal(E,F)} \lie_1^{\sigma \comp \sigma_1} \approx \bigoplus_{\sigma \in \Gal(E,F)} \lie_1^\sigma$$ and by applying Proposition \ref{fromsum} we thus get that 
\begin{align} 
\label{induction2}
\bigoplus_{i=2}^k \left( \bigoplus_{\sigma \in \Gal(E,F)} \lie_i^\sigma \right) \approx \bigoplus_{j=2}^l \left( \bigoplus_{\sigma \in \Gal(E,F)} \lieh_j^\sigma \right).
\end{align}
By applying the induction hypothesis to the isomorphism in (\ref{induction2}), we get the statement of the theorem.
\end{proof}
Note that Theorem \ref{classification} implies that if two Lie algebras have isomorphic underlying Lie algebras, they have the same number of indecomposable ideals in a decomposition. As another consequence, we conclude that there are only a finite number of Lie algebras over $E$ which can have isomorphic underlying Lie algebras.

\begin{Cor}
	\label{finitenumber}
Let $F \subseteq E$ be a Galois extension. Given a Lie algebra $\lieh$ over the field $F$, there exist only finitely many (possibly none) Lie algebras $\lie$ over the field $E$ such that $$\lie_F \approx \lieh.$$
\end{Cor}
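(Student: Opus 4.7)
The plan is to use Theorem \ref{sumconjugate} to transfer the question about underlying Lie algebras over $F$ into a question about decompositions over $E$, and then apply the Krull--Schmidt uniqueness from Theorem \ref{krull} to bound the possibilities.

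First I would fix the target $\lieh$ and pass to the fixed Lie algebra $\lieh \otimes_F E$ over $E$. Decompose it once and for all into indecomposable ideals as $\lieh \otimes_F E \approx \bigoplus_{m=1}^N \liem_m$. This is an isomorphism invariant of $\lieh$ alone.

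Next, suppose $\lie$ is any Lie algebra over $E$ with $\lie_F \approx \lieh$. Extending scalars gives $\lie_F \otimes_F E \approx \lieh \otimes_F E$, while Theorem \ref{sumconjugate} identifies the left hand side with $\bigoplus_{\sigma \in \Gal(E,F)} \lie^\sigma$. Writing $\lie \approx \bigoplus_{i=1}^k \lie_i$ as a direct sum of indecomposable ideals, Lemma \ref{sumsigma} and Corollary \ref{conjindecomposable} together say that $\bigoplus_{\sigma, i} \lie_i^\sigma$ is a decomposition of $\lieh \otimes_F E$ into indecomposable ideals. Comparing with the fixed decomposition $\bigoplus_m \liem_m$ via Theorem \ref{krull} forces $k \cdot |\Gal(E,F)| = N$, so the number of summands of $\lie$ is determined, and each $\lie_i^\sigma$ must be isomorphic to one of the $\liem_m$. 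Inverting this, each indecomposable summand $\lie_i$ of $\lie$ is isomorphic to $\liem_m^{\sigma^{-1}}$ for some pair $(m,\sigma) \in \{1,\ldots,N\} \times \Gal(E,F)$, and hence belongs to a set of at most $N \cdot |\Gal(E,F)|$ isomorphism classes.

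Finally, since $\lie$ is recovered up to isomorphism from its unordered multiset of indecomposable summands (Theorem \ref{krull}), and since there are $k$ of them each drawn from a finite list, only finitely many isomorphism classes of $\lie$ are possible, which is exactly the claim. The main obstacle is really just setting up the two decompositions cleanly so that Krull--Schmidt can be invoked; all the substantive content is already carried by Theorem \ref{sumconjugate} together with the fact that conjugation preserves indecomposability (Corollary \ref{conjindecomposable}).
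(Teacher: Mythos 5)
Your proof is correct, but it takes a different route from the paper. The paper deduces Corollary \ref{finitenumber} in one line from Theorem \ref{classification}: after fixing one solution $\lie_0$ with $(\lie_0)_F \approx \lieh$, that theorem says every other solution has the form $\bigoplus_i (\lie_0)_i^{\sigma_i}$, and there are only finitely many choices of the $\sigma_i$. You instead bypass Theorem \ref{classification} entirely and anchor on the Krull--Schmidt decomposition of $\lieh \otimes_F E$ itself, comparing it via Theorem \ref{krull} with the decomposition $\bigoplus_{\sigma,i} \lie_i^\sigma$ supplied by Theorem \ref{sumconjugate}, Lemma \ref{sumsigma} and Corollary \ref{conjindecomposable}. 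This buys you two things: your argument is self-contained (it does not need the induction that proves Theorem \ref{classification}) and it handles the ``possibly none'' case without first having to exhibit a reference solution, since $\lieh \otimes_F E$ always exists; it also yields an explicit bound, namely that $\lie$ is a sum of $k = N/\vert\Gal(E,F)\vert$ indecomposables drawn from at most $N\cdot\vert\Gal(E,F)\vert$ isomorphism classes. What you lose relative to the paper's route is the precise description of the solutions as $\sigma$-twists of the summands of a single $\lie_0$, but that is not needed for finiteness. One tiny simplification available to you: once Theorem \ref{krull} matches each $\lie_i^\sigma$ with some $\liem_m$, you can just take $\sigma = \mathrm{id}$ to conclude $\lie_i \approx \liem_m$ directly, avoiding the detour through $\liem_m^{\sigma^{-1}}$.
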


\begin{proof}
This follows from Theorem \ref{classification}, since there are only a finite number of possibilities for the $\sigma_i \in \Gal(E,F)$ and a finite number of indecomposable ideals.
\end{proof}

\noindent So in particular, a real Lie algebra admits only a finite number of bi-invariant complex structures up to isomorphism. We will give a method to compute the exact number in the following section.

\section{Applications}

\label{sec:examples}

This section demonstrates some applications of the main results. We focus on $\R \subseteq \C$ and Galois extensions $\Q \subseteq E$ because of the importance for applications, but the methods work for general Galois extensions.

Recall from Example \ref{CoverR} that we denote the non-trivial element in $\Gal(\C,\R)$, namely the complex conjugation map, as $\overline{ \phantom{z}}: \C \to \C$. In the first subsection we construct negative examples for Question \ref{Q5}, where the strategy is to apply Theorem \ref{sameunder} on a complex $2$-step Lie algebra $\lie$ such that $\lie \not \approx \overline{\lie}$, where $\overline{\lie}$ is the complex conjugate Lie algebra of $\lie$. To show that the complex conjugate Lie algebra is not isomorphic to the original one, we use invariants constructed from the Pfaffian form. The second subsection shows how to apply our main results as a tool to determine which Lie algebras in dimension $\leq 4$ are defined over $\Q$ and $\R$. 

\subsection{Bi-invariant complex structures on $2$-step nilpotent Lie algebras} A necessary condition for complex Lie algebras $\lie$ with $\lie \not \approx \overline{\lie}$ is that $\lie$ has no real forms. It is easy to construct such examples, for example as a semi-direct product $\C^2 \rtimes \C$, see \cite{same69-1}, but unfortunately this is harder to achieve for nilpotent Lie algebras. We start by recalling the approach from \cite{dlv12-1} for constructing such examples. 

The essence of this method is to associate to every $2$-step nilpotent Lie algebra $\lien$ a certain polynomial called the Pfaffian form. Using the properties of the Pfaffian form we construct an invariant $c_{\lien} \in \C$ such that for the complex conjugate Lie algebra $\overline{ \lien }$ the invariant is equal to $c_{\overline{\lien}} = \overline{c_{\lien}}$. Our example will be a Lie algebra for which $c_{\lien} \in \C \setminus \R$. 

Let $\lien$ be a $2$-step nilpotent Lie algebra of dimension $n$ over the field $E$. Write $q = \dim_E \left( [\lien,\lien] \right)$ for the dimension of the commutator subalgebra and $p = n-q$, then we say in short that the Lie algebra $\lien$ is of type $(p,q)$. If $p$ is even, then we can associate to $\lien$, with a given basis, a homogeneous polynomial $f_\lien(x_1,\ldots, x_q)$ of degree $\frac{p}{2}$ in $q$ variables with coefficients in $E$ which we call the \textbf{Pfaffian form} of $\lien$. Hereinafter we will denote the vector space of homogeneous polynomials of degree $\frac{p}{2}$ in $q$ variables as $P_{q,\frac{p}{2}}$, so in particular $f_\lien \in P_{q,\frac{p}{2}}$. 

The Pfaffian form depends on the choice of basis for $\lien$, hence it is not unique, but if two Lie algebras $\lien_1, \lien_2$ are isomorphic, then the Pfaffian forms are \textbf{projectively equivalent}, meaning that there exists a matrix $A \in \GL(q,E)$ and a constant $k \in E^\ast$ such that $$f_{\lien_1}(x_1, \ldots, x_q) = k f_{\lien_2}(A(x_1,\ldots, x_q)).$$ For more details about the construction of the Pfaffian form and the proof of this statement we refer to \cite{laur08-1}. 

We now construct invariants for a complex $2$-step nilpotent Lie algebra $\lien$ from the Pfaffian form $f_\lien$. Suppose that $$S, T: P_{q,\frac{p}{2}} \to \C$$ are homogeneous polynomials of the same degree which are $SL(2,\C)$-invariant. It is an exercise to check that in this case if $f_1, f_2 \in P_{q,\frac{p}{2}}$ are projectively equivalent then $$\frac{S(f_1)}{T(f_1)} = \frac{S(f_2)}{T(f_2)}.$$ Hence for a Lie algebra $\lien$ of type $(p,q)$, we can define an invariant $  \frac{S(f_\lien)}{T(f_\lien)} \in \C$ which in particular does not depend on the choice of a basis.

We demonstrate this technique for the specific example of type $(8,2)$, where we use two different homogeneous polynomials $S, T : P_{2,4} \to \C$. The first one of degree $2$ is defined as $$S(a x^4 + b x^3 y + c x^2 y^2 + d x y^3 + e y^4) = ae -4bd + 3c^2$$ whereas the second one of degree $3$ is given by $$T(a x^4 + b x^3 y + c x^2 y^2 + d x y^3 + e y^4) = a c e - a d^2 + 2 b c d - b^2 e - c^3.$$ On \cite[Page 150]{dolg03-1} it is shown that these are indeed invariant under $\SL(2,\C)$, leading to the invariant $$c_\lien = \frac{\left( S(f_\lien) \right)^3}{ \left(T(f_\lien) \right)^2}$$ for the Lie algebras $\lien$ of type $(8,2)$. Note that the coefficients of the polynomials $S$ and $T$ are real numbers, even integers.

Now we are ready to show that the following example of \cite[Example 4.4.]{dlv12-1} has a complex conjugate which is not isomorphic. This example will be the building block for many other types of examples.
\begin{Ex}
\label{diffcomplexconj}
Let $\lie_\lambda$ be the Lie algebra of dimension $10$ with basis $X_1, \ldots, X_8, Z_1, Z_2$ and Lie bracket $[\cdot,\cdot ]$ defined as
\begin{align*}
[X_1,X_5] &= Z_1 & [X_2,X_6] &= Z_1 & [X_3,X_7] &= Z_1\\
[X_4,X_8] &= Z_1 & [X_2,X_5] &= Z_2 & [X_3,X_6] &= Z_2\\
[X_4,X_7] &= Z_2 & [X_1,X_8] &= - Z_2 & [X_2,X_7] &= - \lambda Z_2.
\end{align*}
Using the definition of the Pfaffian form in \cite{laur08-1}, a computation shows that $$f_\lambda(x,y) :=f_{\lie_{\lambda}} (x,y) =  x^4 + \lambda x^2 y^2 + y^4.$$ Note that the Lie algebra $\lie_\lambda$ is indecomposable. By Example \ref{conjugatestructureconstants} the complex conjugate Lie algebra of $\lie_\lambda$ satisfies $\overline{ \lie_\lambda} \approx \lie_{\overline{ \lambda}}$.
\end{Ex}

The Lie algebra of Example \ref{diffcomplexconj} was already used in \cite{dlv12-1} for giving complex Lie algebras which are not defined over $\R$. We apply the same techniques to find Lie algebras for which the complex conjugate Lie algebra is not isomorphic to the original one.

\begin{Prop}
	\label{onlydiffconj}
There exists $\lambda \in \C$ such that the complex Lie algebras $\lie_\lambda$ and $\lie_{\overline{ \lambda}}$ of Example \ref{diffcomplexconj} are not isomorphic.
\end{Prop}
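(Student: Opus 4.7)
The plan is to use the $\SL(2,\C)$-invariant $c_{\lien} = S(f_\lien)^3/T(f_\lien)^2$ introduced just before the proposition. Since $S$ and $T$ have real (indeed integer) coefficients, they commute with complex conjugation, so $c_{\overline{\lien}} = \overline{c_{\lien}}$. By the projective invariance of $S^3/T^2$, the value $c_\lien$ is a genuine isomorphism invariant of a type $(8,2)$ Lie algebra. Thus it suffices to exhibit $\lambda \in \C$ for which $c_{\lie_\lambda} \notin \R$: indeed, combined with $\overline{\lie_\lambda} \approx \lie_{\overline{\lambda}}$ from Example \ref{diffcomplexconj}, we would then have $c_{\lie_{\overline{\lambda}}} = \overline{c_{\lie_\lambda}} \neq c_{\lie_\lambda}$, forcing $\lie_\lambda \not\approx \lie_{\overline{\lambda}}$.

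First I would substitute the explicit Pfaffian form $f_\lambda(x,y) = x^4 + \lambda x^2 y^2 + y^4$ (so $a=e=1$, $c=\lambda$, $b=d=0$) into the formulas for $S$ and $T$, obtaining
\[
S(f_\lambda) = 1 + 3\lambda^2, \qquad T(f_\lambda) = \lambda - \lambda^3 = \lambda(1-\lambda^2),
\]
and hence the rational function
\[
c_{\lie_\lambda} = \frac{(1+3\lambda^2)^3}{\lambda^2(1-\lambda^2)^2}.
\]

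Next I would observe that this is a non-constant rational function of $\lambda$, so the real-algebraic set $\{\lambda \in \C : c_{\lie_\lambda} \in \R\}$ is a proper real subvariety of $\C$; in particular most $\lambda$ work. To make the statement concrete I would simply check one explicit choice: for $\lambda = 1+i$ one has $\lambda^2 = 2i$, so $1 + 3\lambda^2 = 1 + 6i$, $(1+6i)^3 = -107 - 198i$, $\lambda^2(1-\lambda^2)^2 = 2i(1-2i)^2 = 8 - 6i$, and therefore
\[
c_{\lie_{1+i}} = \frac{-107 - 198i}{8 - 6i} = \frac{332 - 2226i}{100} \notin \R.
\]

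The only mild subtlety is making sure that the invariant is well-defined for this choice of $\lambda$, i.e.\ that $T(f_\lambda) \neq 0$; this merely excludes $\lambda \in \{0, \pm 1\}$, none of which is the chosen value $1+i$. No further obstacle is expected: the entire argument reduces to the two short computations above, relying on the machinery (Pfaffian form, $\SL(2,\C)$-invariance of $S,T$, and the compatibility of complex conjugation with the Pfaffian construction) already summarized in this section.
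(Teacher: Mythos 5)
Your proof follows the paper's argument exactly: both use the $\SL(2,\C)$-invariant $c_{\lie_\lambda}=S(f_\lambda)^3/T(f_\lambda)^2=(3\lambda^2+1)^3/(\lambda-\lambda^3)^2$ together with $\overline{\lie_\lambda}\approx\lie_{\overline{\lambda}}$ and reduce the claim to finding $\lambda$ with $c_{\lie_\lambda}\notin\R$. The only difference is that you verify an explicit choice $\lambda=1+i$ (your arithmetic checks out), whereas the paper leaves the choice of such a $\lambda$ implicit; this is a minor strengthening, not a different route.
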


\begin{proof}
Take the polynomials $S, T$ as introduced just above Example \ref{diffcomplexconj}. By evaluating in the polynomial $f_\lambda$ we get $S(f_\lambda) = 3\lambda^2 + 1$ and $T(f_\lambda) = \lambda - \lambda^3$ and thus the invariant introduced above Example \ref{diffcomplexconj} is equal to $$c(\lambda) := c_{\lie_{\lambda}} = \frac{S^3(f_\lambda)}{T^2(f_\lambda)} = \frac{\left(3 \lambda^2 + 1 \right)^3}{\left( \lambda - \lambda^3 \right)^2}$$ Now pick any $\lambda \in \C$ such that $c(\lambda) \notin \R$, then $c\big( \overline{ \lambda} \big) = \overline{c \left( \lambda \right) } \neq c \left( \lambda \right)$ and hence $\lie_{\lambda} \not \approx \lie_{\overline{\lambda}}$ which proves the proposition.
\end{proof}

\noindent There exist $\lambda \in \C \setminus \R$ such that $c(\lambda) \in \R$, for example $c(i) = 2$, which make the invariant $c(\lambda)$ inapplicable for investigating whether or not $\lie_{i}$ and $\lie_{-i}$ are isomorphic. Note that Proposition \ref{onlydiffconj} is true for every Lie algebra $\lien$ of type $(8,2)$ for which the quotient $\frac{S^3(f_\lien)}{T^2(f_\lien)}$ does not lie in $\R$, in particular also for Lie algebras as in \cite[Example 4.5.]{dlv12-1}. 

In Corollary \ref{partialanswer} we gave some conditions under which we recover the original Lie algebra from the underlying Lie algebra. Starting from Proposition \ref{onlydiffconj} we construct $2$-step nilpotent examples illustrating the necessity of these conditions. 
\begin{Ex}
	\label{definedoverF}
Take $\lambda \in \C$ such that $\lie_{\lambda} \not \approx \lie_{\overline{ \lambda}}$ with $\lie_{\lambda}$ the Lie algebra as in Example \ref{diffcomplexconj}. Define the Lie algebra $\lie = \lie_{\lambda} \oplus \lie_{\overline{\lambda}}$, which is defined over $\R$ since it contains $\left(\lie_{\lambda}\right)_\R$ as a real form by Theorem \ref{sumconjugate}. Note that the Lie algebra $\lieh = \lie_{\lambda} \oplus \lie_{\lambda}$ satisfies $\lie_{\R} \approx \lieh_\R$ by Theorem \ref{sameunder}, but that the Lie algebras $\lie$ and $\lieh$ are not isomorpic since $\lie$ has $\lie_{\overline{\lambda}}$ as an indecomposable component and $\lieh$ on the contrary does not. Comparing to Corollary \ref{partialanswer}, we point out that the Lie algebra $\lieh$ is not defined over $\R$ and is not indecomposable.
\end{Ex}

Using the indecomposable Lie algebra of Example \ref{diffcomplexconj} we are able to construct examples with exactly $n$ different bi-invariant complex structures.
\begin{Ex}
	\label{nintot}
Let $\lie_\lambda$ be the Lie algebra as in Example \ref{diffcomplexconj} for any $\lambda \in \C$ as in Proposition \ref{onlydiffconj}. Consider the complex Lie algebra $$\lie = \underbrace{\lie_\lambda \oplus \ldots \oplus \lie_\lambda}_{k \textit{ components}},$$ then we show that the underlying Lie algebra $\lie_\R$ has exactly $k+1$ different bi-invariant complex structures. 
	 
First of all, consider the Lie algebras	$$ \lieh_j =  \underbrace{\lie_{\lambda} \oplus  \ldots \oplus \lie_{\lambda} }_{j \textit{ components}} \oplus \underbrace{\overline{\lie_{\lambda}} \oplus  \ldots \oplus \overline{\lie_{\lambda}} }_{k - j \textit{ components}}$$ for $0 \leq j \leq k$, so in particular $\lieh_k = \lie$. The Lie algebras $\lieh_j$ are all non-isomorphic because of Theorem \ref{krull} and Proposition \ref{onlydiffconj}. Moreover, Theorem \ref{sameunder} implies that the underlying real form $(\lieh_j)_\R$ is isomorphic to $\lie_{\R}$, leading to at least $k+1$ different bi-invariant complex structures on $\lie_{\R}$.
	
Next we show that if $\lieh$ is a complex Lie algebra such that $\lieh_{\R} \approx \lie_{\R}$, then $\lieh \approx \lieh_j$ for some $0 \leq j \leq k$. In fact, this follows directly from Theorem \ref{classification} and using that every conjugate of $\lie_{\lambda}$ over $\R$ is either isomorphic to $\lie_{\lambda}$ or $\overline{ \lie_\lambda}$. This shows that indeed $\lie_{\R}$ has exactly $k+1$ different bi-invariant complex structures.
\end{Ex}
Example \ref{definedoverF} works for every indecomposable complex Lie algebra $\lie$ such that $\overline{\lie} \not \approx \lie$. As an immediate consequence of this example we have the following statement.

\begin{Cor}
For every $n \in \N$, there exists a real $2$-step Lie algebra $\lie_\R$ such that $\lie_\R$ has exactly $n$ different bi-invariant complex structures.
\end{Cor}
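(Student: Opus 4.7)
The plan is to invoke Example \ref{nintot} essentially verbatim. Given a positive integer $n$, first pick $\lambda \in \C$ satisfying the conclusion of Proposition \ref{onlydiffconj}, so that the indecomposable $10$-dimensional $2$-step nilpotent complex Lie algebra $\lie_\lambda$ from Example \ref{diffcomplexconj} is not isomorphic to its complex conjugate $\overline{\lie_\lambda}$. Then set $k = n - 1$ and take
\[
\lie \;=\; \underbrace{\lie_\lambda \oplus \cdots \oplus \lie_\lambda}_{k \text{ components}}.
\]
The claim to be verified is that the real $2$-step nilpotent Lie algebra $\lie_\R$ admits exactly $n$ non-isomorphic bi-invariant complex structures.

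Example \ref{nintot} already packages both halves of this claim. For the lower bound, the Lie algebras
\[
\lieh_j \;=\; \underbrace{\lie_\lambda \oplus \cdots \oplus \lie_\lambda}_{j \text{ components}} \;\oplus\; \underbrace{\overline{\lie_\lambda} \oplus \cdots \oplus \overline{\lie_\lambda}}_{k-j \text{ components}}, \qquad 0 \le j \le k,
\]
are pairwise non-isomorphic by Theorem \ref{krull} combined with Proposition \ref{onlydiffconj}, while each has underlying real Lie algebra isomorphic to $\lie_\R$ by Theorem \ref{sameunder}. For the upper bound, any complex Lie algebra $\lieh$ with $\lieh_\R \approx \lie_\R$ decomposes, by Theorem \ref{classification} applied to the $k$-fold indecomposable decomposition of $\lie$, into $k$ indecomposable summands each of which is a $\Gal(\C,\R)$-conjugate of $\lie_\lambda$. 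Since $\Gal(\C,\R)$ has order two, there are exactly two such conjugates up to isomorphism, namely $\lie_\lambda$ and $\overline{\lie_\lambda}$, so $\lieh$ must be isomorphic to one of the $\lieh_j$.

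The only genuinely delicate point is the degenerate case $n = 1$, where $k = 0$ and the direct sum above is empty. No real obstacle arises here either: one may either read the empty direct sum as the trivial Lie algebra, which carries a unique complex structure, or replace the construction by any indecomposable complex $2$-step nilpotent Lie algebra which is defined over $\R$, such as $\lieh_3(\C)$, in which case Corollary \ref{partialanswer} forces uniqueness of the bi-invariant complex structure on the real form. In either case the technical heart of the argument is entirely carried by Example \ref{nintot} and Proposition \ref{onlydiffconj}, and the Corollary itself requires no additional work beyond choosing $k = n-1$.
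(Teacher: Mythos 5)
Your proof is correct and follows the paper's own argument essentially verbatim: the Corollary is deduced from Example \ref{nintot} by taking $k = n-1$, with the lower bound coming from the $\lieh_j$ via Theorem \ref{krull} and Proposition \ref{onlydiffconj} and the upper bound from Theorem \ref{classification}. Your explicit treatment of the degenerate case $n=1$ (e.g.\ via $\lieh_3(\C)$ and Corollary \ref{partialanswer}) is a small but welcome addition that the paper leaves implicit.
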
 

More generally, the method of Example \ref{nintot} leads to the following result which counts the number of bi-invariant complex structures of an underlying real Lie algebra, giving an explicit form of Corollary \ref{finitenumber}.

\begin{Thm}
Let $\lie$ be a complex Lie algebra with decomposition $\lie = \bigoplus_{i=1}^l \lie_i$ into ideals such that $$\lie_i = \bigoplus_{j=1}^{k_i} \lie_{ij}$$ is a direct sum of indecomposable ideals with either $\lie_{i1} \approx \lie_{ij}$ or $\lie_{i1} \approx \overline{\lie_{ij}}$ for all $1 \leq j \leq k_i$. Moreover we assume that if $\lie_{i1} \approx \lie_{i^\prime 1}$ or $\lie_{i1} \approx \overline{\lie_{i^\prime 1}}$, then $i = i^\prime$ and that there exists $1 \leq l_0 \leq l$ such that $\lie_{i1} \not \approx \overline{\lie_{i1}}$ if and only $i \leq l_0$.  There exists up to isomorphism exactly $$\prod_{i=1}^{l_0} \left(k_i + 1 \right)$$ complex Lie algebras $\lieh$ such that $\lie_\R \approx \lieh_{\R}$.
\end{Thm}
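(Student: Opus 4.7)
The plan is to reduce the counting problem to a direct application of Theorem \ref{classification}, then organize the resulting isomorphism classes by appealing repeatedly to the Krull--Schmidt statement in Theorem \ref{krull}. Since $\Gal(\C,\R) = \{\I_\C, \overline{\phantom{z}}\}$ has order two, the $\sigma$-conjugate of an indecomposable complex Lie algebra $\lie_{ij}$ is either $\lie_{ij}$ itself or its complex conjugate $\overline{\lie_{ij}}$, so Theorem \ref{classification} tells us that every complex Lie algebra $\lieh$ with $\lie_\R \approx \lieh_\R$ is isomorphic to one of the form $\bigoplus_{i=1}^l \bigoplus_{j=1}^{k_i} \lie_{ij}^{\sigma_{ij}}$ for some choice of $\sigma_{ij} \in \Gal(\C,\R)$.

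First I would argue that, up to isomorphism, such a $\lieh$ is entirely encoded by the integers $a_i = \#\{j : \lie_{ij}^{\sigma_{ij}} \approx \lie_{i1}\}$ for $1 \leq i \leq l$. Indeed, by hypothesis each $\lie_{ij}$ is isomorphic to either $\lie_{i1}$ or $\overline{\lie_{i1}}$, so every summand $\lie_{ij}^{\sigma_{ij}}$ collapses to one of these two indecomposable Lie algebras. Grouping summands block by block yields
\begin{align*}
\lieh \approx \bigoplus_{i=1}^l \left( \underbrace{\lie_{i1} \oplus \cdots \oplus \lie_{i1}}_{a_i \text{ copies}} \oplus \underbrace{\overline{\lie_{i1}} \oplus \cdots \oplus \overline{\lie_{i1}}}_{k_i - a_i \text{ copies}}\right).
\end{align*}
Next I would verify that distinct tuples $(a_1,\ldots,a_l)$ can produce isomorphic Lie algebras only in a controlled way. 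By Corollary \ref{conjindecomposable} every summand is indecomposable, and the hypothesis that $\lie_{i1} \not\approx \lie_{i'1}$ and $\lie_{i1} \not\approx \overline{\lie_{i'1}}$ whenever $i \neq i'$ means that the indecomposable ideals appearing across different blocks are pairwise non-isomorphic. Thus Theorem \ref{krull} forces any isomorphism between two such $\lieh$'s to preserve each block, reducing the question to comparing blocks one $i$ at a time.

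For a fixed $i$ the analysis splits into two cases. If $i > l_0$, then $\lie_{i1} \approx \overline{\lie_{i1}}$, so every choice of $a_i \in \{0,1,\ldots,k_i\}$ yields the same Lie algebra $\lie_{i1}^{\oplus k_i}$, contributing only one isomorphism class. If $i \leq l_0$, then $\lie_{i1}$ and $\overline{\lie_{i1}}$ are non-isomorphic indecomposable Lie algebras, and Theorem \ref{krull} implies that two blocks are isomorphic if and only if their values of $a_i$ agree, giving exactly $k_i + 1$ distinct classes. Combining the independent block choices produces $\prod_{i=1}^{l_0}(k_i+1)$ isomorphism classes in total, and conversely each such combinatorial choice is realised: for instance taking $\sigma_{ij} = \I_\C$ for the first $a_i$ indices $j$ and $\sigma_{ij} = \overline{\phantom{z}}$ for the remaining $k_i - a_i$ (after possibly composing with the distinguished isomorphism $\lie_{ij} \approx \lie_{i1}$ or $\overline{\lie_{i1}}$) realises the tuple $(a_1,\ldots,a_{l_0})$, and by Theorem \ref{sameunder} the underlying real Lie algebra is still isomorphic to $\lie_\R$.

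The main obstacle I anticipate is the bookkeeping in the first step: one must be careful that the reduction from arbitrary $\lieh$ to the standard form $\bigoplus_i (\lie_{i1}^{\oplus a_i} \oplus \overline{\lie_{i1}}^{\oplus k_i - a_i})$ genuinely uses both Theorem \ref{classification} (to obtain the $\sigma_{ij}$-form in the first place) and the specific hypothesis on how the indecomposable ideals $\lie_{ij}$ relate to $\lie_{i1}$ and $\overline{\lie_{i1}}$. Everything else is a clean application of Krull--Schmidt.
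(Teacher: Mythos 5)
Your proposal is correct and follows exactly the route the paper intends: the paper's own ``proof'' is a one-line remark that the argument is the same as in Example \ref{nintot} (lower bound by explicit construction via Theorem \ref{sameunder}, upper bound via Theorem \ref{classification} and the Krull--Schmidt uniqueness of Theorem \ref{krull}), with the details left to the reader. Your writeup simply supplies those details, including the correct bookkeeping with the tuples $(a_1,\ldots,a_{l_0})$ and the case split at $l_0$.
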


\noindent 
The proof is exactly the same as the argument in Example \ref{nintot}, by on the one hand constructing Lie algebras to show the lower bound and using Theorem \ref{classification} to show that we achieved all possible Lie algebras. We leave the details to the reader to check.

\subsection{Real and rational forms in low dimensions}
\label{sec:ex2}

Another application of Theorem \ref{sameunder} in combination with Proposition \ref{Fform} is a method to verify which Lie algebras over $E$ are defined over the subfield $F \subseteq E$. We demonstrate this by describing the complex Lie algebras up to dimension $4$ which are definied over $\R$ and over $\Q$. The complex Lie algebras up to dimension $4$ have been classified and a list can be found for example in \cite{bs99-1}. We will use the notations from the latter paper, in particular the Lie algebras as defined in Lemma 2 and 3.

The importance of determining whether a Lie algebra is defined over $\Q$ or over $\R$ lies in geometric applications. If a Lie algebra is defined over $\R$, then the Lie algebra is the complexification of a Lie algebra corresponding to a Lie group. Furthermore, in the nilpotent case Mal'cev showed in \cite{malc49-2} that a $1$-connected nilpotent Lie group admits a cocompact lattice if and only if the corresponding Lie algebra is defined over $\Q$. Hence the fields $\Q$ and $\R$ are the most important ones for studying lattices in nilpotent Lie groups.

Note that most Lie algebras on \cite[Page 4]{bs99-1} are already given by a basis with rational structure constants, so there is only a limited number of examples we have to check. More specifically, the only Lie algebras which are a priori not defined over $\Q$ are the ones depending on a parameter $\lambda, \alpha, \beta \in \C$, namely $\lier_{3,\lambda}(\C), \lier_{3,\lambda}(\C) \oplus \C, \lie_1(\alpha), \lie_2(\alpha,\beta), \lie_3(\alpha), \lie_8(\alpha)$. We will first deal with the last four types and treat with the first two afterwards.

If a complex Lie algebra $\lie$ is defined over $\R$, then the complex conjugate $\overline{\lie}$ must be isomorphic to $\lie$ by Proposition \ref{overF}. Consider for every $\alpha \in \C^\ast$ the Lie algebra $\lie_1(\alpha)$ of dimension $4$ for which the Lie bracket of the basis vectors $X_1, X_2, X_3, X_4$ is determined by 
\begin{align*}
[X_1, X_2]  & = X_2\\
[X_1,X_3] &= X_3\\
[X_1,X_4] &= \alpha X_4.
\end{align*}
Note that $\lie_1(\alpha) \approx \lie_1(\beta)$ if and only if $\alpha = \beta$. Since $\overline{ \lie_1(\alpha) } \approx \lie_1(\overline{ \alpha})$ by Example \ref{conjugatestructureconstants}, we see that $\lie_1(\alpha)$ is defined over $\R$ if and only if $\alpha \in \R^\ast$. A similar argument deals with the Lie algebras $\lie_2(\alpha, \beta)$, $\lie_3(\alpha)$ and $\lie_8(\alpha)$ of \cite{bs99-1}, showing that these Lie algebras are defined over $\R$ if and only if the defining parameters are real. 

To check for which parameters $\alpha \in \C^\ast$ the Lie algebra $\lie_1(\alpha)$ is defined over the rationals, we can make a similar analysis, by replacing the field extension $\R \subseteq \C$ by the extension $\Q \subseteq \Q(\alpha) \subseteq \C$. If we assume that $\alpha \notin \Q$, then either $\alpha$ is algebraic over $\Q$, meaning that $[\Q(\alpha): \Q] < \infty$, or $\alpha$ is transcendental over $\Q$, meaning that $[\Q(\alpha):\Q] = \infty$. In the first case there exists a Galois extension $E$ of $\Q$ with $\alpha \in E$, in the second case we take $E = \Q(\alpha)$. both leading to an automorphism $\sigma \in \Aut(E)$ with $\sigma(\alpha) \neq \alpha$. By taking an extension $\tilde{\sigma}: \C \to \C$ of $\sigma$ to $\C$, we get that $\lie_1(\alpha)^{\tilde{\sigma}} \approx \lie_1(\tilde{\sigma}(\alpha)) = \lie_1(\sigma(\alpha))$ is not isomorphic to $\lie_1(\alpha)$. This shows that the Lie algebra $\lie_1(\alpha)$ and similarly the Lie algebras $\lie_2(\alpha, \beta)$, $\lie_3(\alpha)$ and $\lie_8(\alpha)$ of \cite{bs99-1} are defined over $\Q$ if and only if the defining parameters are rational.

The final cases which remain are the $3$-dimensional Lie algebra $\lier_{3,\lambda}(\C)$ and the $4$-dimensional Lie algebra $\lier_{3,\lambda}(\C) \oplus \C$ for $\lambda \in \C^\ast$. The Lie algebra $\lier_{3,\lambda}(\C)$ has basis $X_1, X_2, X_3$ and Lie bracket given by
\begin{align*}
[X_1, X_2] &= X_2\\
[X_1,X_3] &= \lambda X_3.
\end{align*}
The Lie algebras $\lier_{3,\lambda}(\C) \approx \lier_{3,\mu}(\C)$ are isomorphic if and only if $\lambda = \mu^{\pm 1}$. By Proposition \ref{fromsum}, this also implies that the Lie algebras $\lier_{3,\lambda}(\C) \oplus \C\approx \lier_{3,\mu}(\C) \oplus \C$ are isomorphic if and only if $\lambda = \mu^{\pm 1}$. We answer the question over which fields $\Q \subseteq F \subseteq \C$ of characteristic $0$ these Lie algebras are defined in full generality.

\begin{Prop}
	\label{overFprop}
	The complex Lie algebra $\lier_{3,\lambda}(\C)$ or $\lier_{3,\lambda}(\C) \oplus \C$ with $\lambda \in \C^\ast$ is defined over $F$ if and only if 
	\begin{align} \label{overF} \lambda^2 + a \lambda + 1 = 0\end{align} for some $a \in F$.
\end{Prop}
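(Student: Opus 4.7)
The plan is to prove the two directions separately, reducing necessity to a Galois-theoretic statement about $\lambda$ via Proposition \ref{Fform} and handling sufficiency by an explicit construction of the $F$-form. Assume first that $\lier_{3,\lambda}(\C)$ is defined over $F$. By Example \ref{conjugatestructureconstants}, its $\sigma$-conjugate is $\lier_{3,\sigma(\lambda)}(\C)$ for every $\sigma \in \Aut(\C,F)$, so Proposition \ref{Fform} together with the stated classification $\lier_{3,\mu}(\C) \approx \lier_{3,\mu'}(\C)$ iff $\mu' = \mu^{\pm 1}$ forces $\sigma(\lambda) \in \{\lambda,\lambda^{-1}\}$ for every such $\sigma$. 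The same conclusion holds for $\lier_{3,\lambda}(\C) \oplus \C$ after stripping off the abelian summand via Theorem \ref{krull}, using that $\lier_{3,\mu}(\C)$ is indecomposable.

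Next I would translate this $\Aut(\C,F)$-stability of $\{\lambda,\lambda^{-1}\}$ into the polynomial equation. First rule out transcendence: if $\lambda$ were transcendental over $F$, then the $F$-automorphism of $F(\lambda) \cong F(t)$ sending $t \mapsto t+1$ would extend, via a transcendence basis of $\C$ over $F(\lambda)$ together with the algebraic closedness of $\C$, to some $\tilde{\sigma} \in \Aut(\C,F)$ with $\tilde{\sigma}(\lambda) = \lambda+1 \notin \{\lambda,\lambda^{-1}\}$, contradicting the previous step. Hence $\lambda$ is algebraic over $F$, and since its minimal polynomial is separable (characteristic zero) with all roots in the two-element set $\{\lambda,\lambda^{-1}\}$, it must be either $x - \lambda$ (so $\lambda \in F$) or $(x-\lambda)(x-\lambda^{-1}) = x^2 + ax + 1$ with $a = -(\lambda+\lambda^{-1}) \in F$. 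Either way, $\lambda^2 + a\lambda + 1 = 0$ for some $a \in F$.

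For the converse, given $\lambda^2 + a\lambda + 1 = 0$ with $a \in F$, I would exhibit the $F$-form explicitly. If $\lambda \in F$ the given structure constants already lie in $F$, so assume $\lambda \notin F$, which forces $a \neq \pm 2$. Define $\mathfrak{g}$ over $F$ with basis $Y_1,Y_2,Y_3$ and brackets $[Y_1,Y_2] = Y_3$, $[Y_1,Y_3] = (a-2)Y_2 + (2-a)Y_3$, $[Y_2,Y_3] = 0$ (Jacobi is automatic from the abelian $2$-dimensional ideal $\langle Y_2,Y_3\rangle$). Then $\ad(Y_1)$ acts on $\langle Y_2,Y_3\rangle$ via the companion matrix of $x^2 - (2-a)x + (2-a)$, whose distinct nonzero roots $\mu_1,\mu_2 \in \C$ satisfy $\mu_2/\mu_1 + \mu_1/\mu_2 = (2-a) - 2 = -a$, so the ratio $\rho := \mu_2/\mu_1$ lies in $\{\lambda,\lambda^{-1}\}$. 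Diagonalising over $\C$ and rescaling $Y_1$ by $\mu_1^{-1}$ identifies $\mathfrak{g} \otimes_F \C$ with $\lier_{3,\lambda}(\C)$, and adjoining a central $F$-generator $Z$ produces an $F$-form of $\lier_{3,\lambda}(\C) \oplus \C$.

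The hard part of the sufficiency direction is that the naive companion matrix of $x^2 + ax + 1$, i.e.~the brackets $[Y_1,Y_3] = -Y_2 - aY_3$, would realise only $\lier_{3,\lambda^2}(\C)$, not $\lier_{3,\lambda}(\C)$: the isomorphism type of $\lier_{3,\rho}(\C)$ is governed by the ratio of the eigenvalues of $\ad(Y_1)$, not by the eigenvalues themselves. The choice $s = p = 2-a$ in the companion matrix is forced by solving $s^2/p - 2 = -a$, which is exactly the condition for this ratio to equal $\lambda^{\pm 1}$.
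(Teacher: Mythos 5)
Your proof is correct and takes essentially the same route as the paper: necessity via Proposition \ref{Fform} (every $\sigma$-conjugate $\lier_{3,\sigma(\lambda)}(\C)$ must be isomorphic to $\lier_{3,\lambda}(\C)$, hence $\sigma(\lambda)\in\{\lambda,\lambda^{-1}\}$, ruling out transcendence and then reading off the minimal polynomial), and sufficiency via exactly the same explicit $F$-form with brackets $[Y_1,Y_2]=Y_3$, $[Y_1,Y_3]=(a-2)Y_2+(2-a)Y_3$. The only difference is that you verify $\lie\otimes_F\C\approx\lier_{3,\lambda}(\C)$ by computing the ratio of the eigenvalues of $\ad(Y_1)$ on the abelian ideal instead of the paper's explicit change of basis; both computations are valid, and your closing remark correctly explains why the coefficients $(a-2)$, $(2-a)$ rather than the naive companion matrix of $x^2+ax+1$ are needed.
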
 

\begin{proof}
	We only give the proof for $\lier_{3,\lambda}(\C)$ since the other case is identical. 
	
	To show that Equation (\ref{overF}) is a necessary condition, we assume that $\lier_{3,\lambda}(\C)$ is defined over $F$ and we consider the field $E = F(\lambda)$. First we show that $\lambda$ has to be algebraic over $F$. If not, then there would exist an automorphism $\sigma \in \Aut(E,F)$ such that $\sigma(\lambda) = 2 \lambda$. By extending $\sigma$ to an automorphism $\tilde{\sigma}: \C \to \C$, Proposition \ref{Fform} and Example \ref{conjugatestructureconstants} imply that $\left( \lier_{3,\lambda}(\C) \right)^{\tilde{\sigma}} \approx \lier_{3,\sigma(\lambda)} (\C)= \lier_{3,2\lambda}(\C)$ and $\lambda \neq 2 \lambda \neq \lambda^{-1}$, we get a contradiction, so $\lambda$ must be algebraic over $F$.
	
	Similarly as the previous argument, it follows that for every conjugate $\mu$ of $\lambda$ over $F$ that $\lier_{3,\lambda}(\C) \approx \lier_{3,\mu}(\C)$ by Proposition \ref{overF}. Since this is only possible for $\lambda = \mu^{\pm 1}$, we conclude that $\lambda$ can have at most two conjugates. If $\lambda$ has only one conjugate, then $\lambda \in F$ and hence Equation (\ref{overF}) is satisfied for $a = - \frac{1 + \lambda^2}{\lambda} \in F$. If $\lambda$ has two conjugates $\lambda$ and $\lambda^{-1}$, then the minimal polynomial of $\lambda$ over $F$ has degree $2$ and constant term $\lambda \lambda^{-1} = 1$, so Equation (\ref{overF}) is satisfied for the minimal polynomial over $F$.
	
	Next, we show that Equation (\ref{overF}) is a sufficient condition. Assume that $\lambda$ satisfies Equation (\ref{overF}) for some $a \in F$. If $\lambda \in F$, then of course the Lie algebra $\lier_{3,\lambda}(\C)$ is defined over $F$, so we can assume that $\lambda \notin F$. Note that $a = 2$ implies that $\lambda = - 1 \in F$ and in particular we get $a \neq 2$. Write $E = F(\lambda) \supseteq F$, which is a Galois extension of degree $2$ and take $1 \neq \sigma \in \Gal(E,F)$ the non-trivial element of the Galois group. Note that $\sigma(\lambda) = \lambda^{-1}$, so in particular $\lier_{3,\lambda}(E) \approx \lier_{3,\sigma(\lambda)}(E) \approx \left( \lier_{3,\lambda}(E)\right)^\sigma$, which is a necessary condition for being defined over $F$. 
	
	To show that $\lier_{3,\lambda}(\C)$ is defined over $F$, we give an explicit Lie algebra $\lie$ over $\C$ with structure constants in $F$ which is isomorphic to $\lier_{3,\lambda}(\C)$. The Lie algebra $\lie$ is given by the basis $Y_1, Y_2, Y_3$ and Lie bracket 
	\begin{align*}
	[Y_1,Y_2] &= Y_3,\\
	[Y_1,Y_3] &= \left( a - 2 \right) Y_2 + \left(2 - a \right) Y_3, \\
	[Y_2,Y_3] &= 0.
	\end{align*}
    Consider the basis $X_1, X_2, X_3$ given by 
	\begin{align*} 
	X_1 &= \frac{1}{\lambda + 1} Y_1 = - \frac{\sigma(\lambda) + 1}{a - 2} Y_1 \\
	X_2 &= - (\sigma(\lambda) + 1 ) Y_2 +  Y_3 \\
	X_3 &= - (\lambda + 1) Y_2 + Y_3.
	\end{align*} A computation, where we use $\lambda + \sigma(\lambda) = -a$, shows that 
	\begin{align*}
	[X_1,X_2] &= \frac{1}{\lambda + 1} \left( (a-2) Y_2 + (2-a) Y_3 - (\sigma(\lambda) + 1) Y_3 \right) \\
	& = -(\sigma(\lambda) + 1) Y_2 + \frac{1 - a - \sigma(\lambda)}{\lambda + 1}Y_3 = X_2 \\
	[X_1,X_3] &= \frac{1}{\lambda + 1} \left( (a-2) Y_2 + (2-a) Y_3 - (\lambda + 1) Y_3 \right) \\ &= -(\sigma(\lambda) + 1) Y_2 + \frac{1 - a - \lambda }{\lambda+1} Y_3 = \sigma(\lambda) X_3 \\
		[X_2,X_3] &= 0
	\end{align*} 
	and thus $\lie \approx \lier_{3,\sigma(\lambda)}(\C) \approx \lier_{3,\lambda}(\C)$. This ends the proof of the proposition.
\end{proof}

\noindent For example the Lie algebra $\lier_{3,i}(\C)$ is defined over $\R$, although the defining parameter does not lie in $\R$. We summarize Proposition \ref{overFprop} for the special case $F = \R$ as follows.

\begin{Prop}
	\label{conjfor4}
Let $\lie$ be a complex Lie algebra of dimension $\leq 4$, then $\lie$ is defined over $\R$ if and only if $\lie \approx \overline{\lie}$.
\end{Prop}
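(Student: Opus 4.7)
The forward direction is immediate from Proposition \ref{Fform}: if $\lie$ is defined over $\R$, then taking the $\overline{\phantom{z}}$\!-conjugate gives a Lie algebra isomorphic to $\lie$, so $\overline{\lie} \approx \lie$.

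For the reverse direction, my plan is to proceed case by case through the classification of complex Lie algebras of dimension $\leq 4$ in \cite{bs99-1}, using the same breakdown already employed in the preceding pages. The bulk of the entries in that list are presented with structure constants in $\Q \subseteq \R$, so they are automatically defined over $\R$ and nothing has to be checked. The only Lie algebras for which definability over $\R$ is not obvious are the six parametrized families $\lie_1(\alpha)$, $\lie_2(\alpha,\beta)$, $\lie_3(\alpha)$, $\lie_8(\alpha)$, $\lier_{3,\lambda}(\C)$, and $\lier_{3,\lambda}(\C)\oplus \C$, so the proposition reduces to verifying the equivalence on these.

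For $\lie_1(\alpha)$, $\lie_2(\alpha,\beta)$, $\lie_3(\alpha)$, and $\lie_8(\alpha)$, Example \ref{conjugatestructureconstants} shows that the complex conjugate of such a Lie algebra is the one with the parameters replaced by their complex conjugates. The earlier discussion already observed that these Lie algebras are pairwise non-isomorphic for distinct parameter values, so $\lie \approx \overline{\lie}$ forces each parameter to be real; but then the defining basis has real structure constants, giving an $\R$-form. Hence the equivalence holds for these four families.

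The essential case is $\lier_{3,\lambda}(\C)$ and $\lier_{3,\lambda}(\C)\oplus \C$. Here the isomorphism classification $\lier_{3,\lambda}(\C) \approx \lier_{3,\mu}(\C)$ iff $\mu = \lambda^{\pm 1}$ was recalled above, and Example \ref{conjugatestructureconstants} gives $\overline{\lier_{3,\lambda}(\C)} \approx \lier_{3,\overline{\lambda}}(\C)$. So the hypothesis $\lie \approx \overline{\lie}$ translates to $\overline{\lambda} \in \{\lambda, \lambda^{-1}\}$, i.e.\ either $\lambda \in \R^\ast$ or $|\lambda| = 1$. In either case $a := -(\lambda + \lambda^{-1})$ is a real number, since in the first case $\lambda^{-1} \in \R$, and in the second case $\lambda^{-1} = \overline{\lambda}$ so $a = -2\,\mathrm{Re}(\lambda) \in \R$. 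By construction $\lambda^2 + a\lambda + 1 = 0$, and Proposition \ref{overFprop} then yields that $\lier_{3,\lambda}(\C)$, and consequently $\lier_{3,\lambda}(\C)\oplus \C$, is defined over $\R$. This completes the last case.

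The work is really just bookkeeping: no case presents a genuine obstacle because the heavy lifting has already been done in Proposition \ref{overFprop}. The only mildly subtle point is recognizing that the two alternatives $\overline{\lambda} = \lambda$ and $\overline{\lambda} = \lambda^{-1}$ can be handled uniformly by the same real coefficient $a = -(\lambda + \lambda^{-1})$, so that one appeal to Proposition \ref{overFprop} dispatches both subcases of the $\lier_{3,\lambda}(\C)$ family at once.
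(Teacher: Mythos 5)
Your proof is correct and follows essentially the same route as the paper: Proposition \ref{conjfor4} is stated there as a summary of the case-by-case analysis of the classification in \cite{bs99-1} carried out in the preceding subsection, and your argument reproduces exactly that analysis --- the rational-structure-constant algebras are trivial, the families $\lie_1(\alpha),\lie_2(\alpha,\beta),\lie_3(\alpha),\lie_8(\alpha)$ are handled through Example \ref{conjugatestructureconstants} and the parameter classification, and the $\lier_{3,\lambda}(\C)$ case rests on Proposition \ref{overFprop}. Your uniform choice $a=-(\lambda+\lambda^{-1})$, real in both subcases $\overline{\lambda}=\lambda$ and $\overline{\lambda}=\lambda^{-1}$, is a clean way to package the final step.
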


\noindent We conjecture that this is the case for all complex Lie algebras, see Section \ref{openquestions}. The methods of this section can be applied for every field $E$ and any classification of Lie algebras, for example also for nilpotent Lie algebras up to dimension $7$. 

\section{Open questions}
\label{openquestions}

The results of this paper open new directions for further research on Lie algebras over different fields. This sections gathers these open questions and relates them to other problems. 

Proposition \ref{Fform} states that if a Lie algebra is defined over a field $F$, then every conjugate Lie algebra over the field $F$ is isomorphic. In the special case of the field extension $\R \subseteq \C$, this is stating that if the complex Lie algebra is defined over $\R$, then $\lie \approx \overline{ \lie }$. We conjecture that the converse holds as well.

\begin{Con}
	\label{conRC}
If $\lie$ is a complex Lie algebra such that $\lie \approx \overline{\lie}$, then $\lie$ is defined over $\R$.
\end{Con}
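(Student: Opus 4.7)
The plan is to rephrase the hypothesis $\lie \approx \overline{\lie}$ as the existence of a complex antilinear Lie algebra automorphism of $\lie$, and then to upgrade this to a complex antilinear Lie algebra \emph{involution}. Once such an involution $\sigma$ is in hand, its fixed-point set $\lie^\sigma$ is an $\R$-subalgebra satisfying $\lie = \lie^\sigma \oplus i \lie^\sigma$, which exhibits $\lie$ as the complexification of $\lie^\sigma$ and hence shows it is defined over $\R$. Concretely, writing $\sigma$ for complex conjugation and $\varphi^\sigma : \lie \to \overline{\lie}$ for the $\sigma$-isomorphism of Section \ref{sec:conj}, and $\psi : \lie \to \overline{\lie}$ for a complex-linear isomorphism provided by the hypothesis, the composition $\tau := (\varphi^\sigma)^{-1} \comp \psi : \lie \to \lie$ is a $\sigma$-automorphism of $\lie$ by Lemma \ref{compsigma}. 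If one is lucky enough to have $\tau^2 = \I_\lie$ then $\tau$ is the desired involution; in general $\tau^2$ is only a complex-linear Lie algebra automorphism of $\lie$, and the content of the conjecture is that $\tau$ can always be corrected into an involution.

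The natural correction mechanism is to look for a complex-linear automorphism $h \in \Aut_\C(\lie)$ such that $\sigma' := h \comp \tau$ satisfies $(\sigma')^2 = \I_\lie$. Expanding, this amounts to solving the twisted equation $h \cdot (\tau \comp h \comp \tau^{-1}) = \tau^{-2}$ inside the algebraic group $G := \Aut_\C(\lie)$. This is a Galois-cohomological problem: it asks that the class determined by $\tau^2$ in a first cohomology set of $\Gal(\C,\R)$ acting on $G$ (with action twisted by $\tau$) be trivial. The strategy is to verify this vanishing first in settings where enough structure is available, and then assemble the general case.

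I would begin with the semisimple case, where the existence of a real form (e.g.~a split or compact form) is classical and the cohomology of $G$ is controlled by Borel--Serre type results. Next I would treat the nilpotent case by induction on the nilpotency class, lifting the involution along the central series: if $\tau$ descends to an involution on $\faktor{\lie}{\mathfrak{z}(\lie)}$ by induction, one attempts to adjust the lift to $\lie$ using the freedom coming from $\mathfrak{z}(\lie)$, leading to a linear cohomology problem that should be tractable. The solvable case could then be attacked via the derived series, and the general case via a Levi-type decomposition combined with the results in \cite{fgh13-1} to reduce to indecomposable pieces. The main obstacle I foresee is precisely in the last step: $\Aut_\C(\lie)$ is in general neither connected nor reductive, and its component group, together with unipotent radicals that are not controlled by Lie-theoretic duality, can a priori produce non-trivial cohomology classes. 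Either a uniform structural argument controlling this cohomology, or else a counterexample coming from a Lie algebra with a particularly pathological automorphism group, will be needed to settle Conjecture \ref{conRC}; my expectation is that the conjecture does hold and that a proof will proceed along the Levi plus nilpotent induction outlined above, provided the component group issue can be handled by an additional averaging or conjugation argument using $\tau$ itself.
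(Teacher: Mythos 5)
You are attempting to prove a statement that the paper itself presents as an open \emph{conjecture}: there is no proof of Conjecture \ref{conRC} in the paper. The paper only (i) verifies it for complex Lie algebras of dimension $\leq 4$ by going through the explicit classification of \cite{bs99-1} and Proposition \ref{overFprop} (this is Proposition \ref{conjfor4}), and (ii) proves it \emph{equivalent} to Conjecture \ref{realsum} (that $\lie \oplus \lie$ defined over $\R$ forces $\lie$ defined over $\R$), using Proposition \ref{Fform}, Lemma \ref{sumsigma}, Proposition \ref{copies} and Theorem \ref{sumconjugate}. So there is no "paper's own proof" to compare against, and any complete argument you produce would be a new result rather than a reconstruction.

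Your proposal correctly identifies the right framework: the hypothesis $\lie \approx \overline{\lie}$ yields an antilinear automorphism $\tau$ of $\lie$ (in the paper's language, a $\sigma$-automorphism for $\sigma$ the complex conjugation), a real form is equivalent to an antilinear \emph{involution}, and the whole content of the conjecture is whether $\tau$ can be corrected by some $h \in \Aut_{\C}(\lie)$ so that $(h \comp \tau)^2 = \I_{\lie}$. But that correction step is exactly where your argument stops being a proof. The equation $h \cdot \left(\tau \comp h \comp \tau^{-1}\right) = \tau^{-2}$ is a nonabelian descent problem with no base point, and none of the cases you sketch is actually carried out: the semisimple case is true but for the trivial reason that every complex semisimple Lie algebra already has a split real form (so nothing is tested there); the central-series induction for nilpotent $\lie$ requires controlling a genuinely nonlinear obstruction when lifting the involution, not just "a linear cohomology problem"; and the Levi-plus-indecomposable assembly is precisely where $\Aut_{\C}(\lie)$ fails to be connected or reductive, as you yourself note. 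Since you end by conceding that either a uniform vanishing argument or a counterexample is still needed, the proposal is a plausible research program consistent with the paper's Question \ref{Qcon}, but it does not prove the statement, and no such proof currently exists in the paper. (A small notational caution: writing $\lie^{\sigma}$ for the fixed-point set of an involution collides with the paper's use of $\lie^{\sigma}$ for the conjugate Lie algebra of Definition \ref{conjugatedef}.)
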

\noindent For Lie algebras of dimension $\leq 4$ the conjecture holds, see Proposition \ref{conjfor4}. We do not know of any method which shows that $\lie$ is not defined over $\R$ which does not imply that in fact $\lie \not \approx \overline{ \lie}$ as in Proposition \ref{onlydiffconj}.

We now state an equivalent conjecture about the existence of real forms in direct sums. If the complex Lie algebra $\lie$ is defined over $\R$, then also the direct sum $\lie \oplus \lie$ is defined over $\R$. The conjecture whether the converse of this statement always holds is equivalent to Conjecture \ref{conRC}.

\begin{Con}
	\label{realsum}
Let $\lie$ be a complex Lie algebra such that $\lie \oplus \lie$ is defined over $\R$, then also $\lie$ is defined over $\R$.
\end{Con}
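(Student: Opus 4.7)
The plan is to reduce Conjecture \ref{realsum} to Conjecture \ref{conRC}, establishing the equivalence claimed in the surrounding text. Assume that $\lie \oplus \lie$ is defined over $\R$. By Proposition \ref{Fform} applied to the extension $\R \subseteq \C$, every Lie algebra defined over $\R$ is isomorphic to its complex conjugate, so $\overline{\lie \oplus \lie} \approx \lie \oplus \lie$. The key observation is that complex conjugation distributes over direct sums: by Lemma \ref{sumsigma}, $\overline{\lie \oplus \lie} \approx \overline{\lie} \oplus \overline{\lie}$. Combining these two isomorphisms gives $\overline{\lie} \oplus \overline{\lie} \approx \lie \oplus \lie$.

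At this point I would invoke the cancellation result, Proposition \ref{copies}, which applies precisely to a direct sum of $n$ copies on each side. This yields $\overline{\lie} \approx \lie$, placing us exactly in the hypothesis of Conjecture \ref{conRC}, whose conclusion gives that $\lie$ is defined over $\R$.

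For completeness, to verify the equivalence of the two conjectures, I would also show the converse implication. Starting from $\lie \approx \overline{\lie}$ and applying Theorem \ref{sumconjugate} to the Galois extension $\R \subseteq \C$ with $\Gal(\C,\R) = \{\I_\C, \overline{\phantom{z}}\}$, we obtain
$$ \lie_\R \otimes_\R \C \;\approx\; \lie \oplus \overline{\lie} \;\approx\; \lie \oplus \lie, $$
exhibiting $\lie_\R$ as an $\R$-form of $\lie \oplus \lie$. Assuming Conjecture \ref{realsum} then yields that $\lie$ is defined over $\R$, which is Conjecture \ref{conRC}.

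The genuine obstacle, which I do not see how to overcome with the tools developed in the paper, is to prove either conjecture unconditionally. The difficulty is that knowing $\lie \oplus \lie$ has some $\R$-form $\lieh$ only tells us $\lieh \otimes_\R \C \approx \lie \oplus \lie$; there is no direct mechanism to descend this $\R$-structure to a single summand, since $\lieh$ need not itself split as $\lie_0 \oplus \lie_0$ with $\lie_0 \otimes_\R \C \approx \lie$ (indeed, the indecomposable summands of $\lieh$ might genuinely mix the two copies). Overcoming this almost certainly requires either new invariants distinguishing non-conjugate complex Lie algebras without real forms, or a Galois-cohomological argument for $\Aut(\lie)$ that upgrades an abstract isomorphism $\lie \approx \overline{\lie}$ to a semilinear involution defining an $\R$-form. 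Both directions seem to lie outside the scope of the elementary methods used in this paper.
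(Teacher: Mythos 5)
Your argument is correct and matches the paper's own treatment essentially step for step: the statement is a conjecture, and what the paper actually proves is its equivalence with Conjecture \ref{conRC}, using Proposition \ref{Fform}, Lemma \ref{sumsigma} and Proposition \ref{copies} in one direction and Theorem \ref{sumconjugate} in the other, exactly as you do. Your closing remark that neither conjecture is proved unconditionally is also consistent with the paper, which leaves both open.
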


\begin{proof}[Proof of equivalence between Conjecture \ref{conRC} and \ref{realsum}:]
First assume that Conjecture \ref{conRC} is true and assume that $\lie$ is a complex Lie algebra with $\lie \oplus \lie$ defined over $\R$. By Proposition \ref{Fform} we get that $\lie \oplus \lie \approx \overline{ \lie \oplus \lie }  $ or hence that $$\lie \oplus \lie \approx \overline{\lie} \oplus \overline{ \lie}$$ by Lemma \ref{sumsigma}. By Proposition \ref{copies} we conclude that $\lie \approx \overline{\lie}$ or thus that $\lie$ has a real form by assumption. 

Next we assume that Conjecture \ref{realsum} holds and that $\lie$ is a complex Lie algebra with $\lie \approx \overline{ \lie }$. By Theorem \ref{sumconjugate} we get that $\lie_{\R}$ is a real form of $\lie \oplus \overline{ \lie } \approx \lie \oplus \lie$. Hence by assuming Conjecture \ref{realsum} we find that $\lie$ is defined over $\R$.
\end{proof}

The generalized version of Conjecture \ref{conRC} for arbitrary field extensions is widely open, since we only dealt with fields $E$ of degree $2$ over $\Q$ before.
\begin{QN}
	\label{Qcon}
Let $F \subseteq E$ be a Galois extension and $\lie$ a Lie algebra over $E$. Assume that $\lie^\sigma \approx \lie$ for all $\sigma \in \Gal(E,F)$, does it hold that $\lie$ is defined over $F$? 
\end{QN}
\noindent A positive answer to Question \ref{Qcon} would relate the existence of $F$-forms to studying conjugate Lie algebras, leading to new directions for studying Lie algebras over different fields.

\label{sec:qu}

\bibliography{ref}
\bibliographystyle{plain}

\end{document}